\documentclass[11pt,a4paper]{article}

\usepackage{amsmath,amssymb,amsthm,mathtools}
\usepackage{geometry}
\usepackage{placeins}
\usepackage{booktabs,array,longtable}
\usepackage{microtype}
\usepackage{xcolor}
\usepackage{tikz}
\usepackage[numbers,sort&compress]{natbib}
\usepackage[colorlinks=true,linkcolor=blue!45!black,
  citecolor=green!35!black,urlcolor=blue!50!black]{hyperref}

\newtheorem{theorem}{Theorem}[section]
\newtheorem{proposition}[theorem]{Proposition}
\newtheorem{lemma}[theorem]{Lemma}

\theoremstyle{remark}

\title{$6$ powers of $2$ in the Linnik--Goldbach problem}
\author{Huixi Li
  \thanks{\raggedright School of Mathematical Sciences and LPMC,
  Nankai University, Tianjin, China. Email
  \mbox{lihuixi@nankai.edu.cn}.}
\and
Zhixin Liu
  \thanks{\raggedright Corresponding author. School of Mathematical Sciences, Tianjin University,
  Tianjin, China. Email \mbox{zhixinliu@tju.edu.cn}.}
}
\date{}

\begin{document}
\maketitle

\begin{abstract}
Linnik's approximation to Goldbach's conjecture concerns the representation
of sufficiently large even integers as the sum of two primes and a bounded
number of powers of two. 
We prove that $6$ powers of $2$ suffice unconditionally and
that $5$ suffice under the GRH, improving previous works. 
\end{abstract}

\noindent\textbf{Mathematics Subject Classification (2020).}
Primary 11P32; Secondary 11P55, 11L07, 11N36.

\noindent\textbf{Keywords.}
Linnik--Goldbach problem, powers of two, circle method, sieve method.

\section{Introduction}\label{sec:introduction}

Linnik's approximation to Goldbach's conjecture asks for a fixed integer
$K$ such that every sufficiently large even integer has a representation
\begin{equation}\label{eq:linnik}
 N=p_1+p_2+\sum_{j=1}^k2^{\nu_j},\qquad 0\le k\le K,
\end{equation}
where $p_1,p_2$ are primes and the exponents are positive integers. 
By comparison, Chen~\cite{Chen} proved in 1973 that every sufficiently
large even integer is the sum of a prime and an integer with at most
two prime factors.
Linnik~\cite{Linnik1,Linnik2} proved the existence of $K$ under the
generalised Riemann hypothesis (GRH) in 1951 and unconditionally in
1953. Gallagher~\cite{Gallagher} simplified the argument in 1975.
Table~\ref{tab:history} includes the subsequent explicit bounds.

In 2026, Johnston and Trudgian~\cite{JT} proved that $7$ powers of $2$ suffice unconditionally and $6$ powers 
under GRH.
\begin{table}[htbp]
\centering
\small
\caption{Selected admissible bounds in \eqref{eq:linnik} for every
sufficiently large even integer.}
\begin{tabular*}{\linewidth}{@{\extracolsep{\fill}}llrr@{}}
\toprule
Year & Work & Unconditional $K$ & GRH $K$ \\
\midrule
1951 & Linnik~\cite{Linnik1} & -- & unspecified \\
1953 & Linnik~\cite{Linnik2} & unspecified & -- \\
1975 & Gallagher~\cite{Gallagher} & unspecified & -- \\
1998 & Liu--Liu--Wang (I)~\cite{LLWI} & -- & 770 \\
1998 & Liu--Liu--Wang (II)~\cite{LLW} & 54000 & -- \\
1999 & Liu--Liu--Wang~\cite{LLWIII} & -- & 200 \\
1999 & Wang~\cite{Wang} & 2250 & 160 \\
2000 & Li~\cite{Li1} & 25000 & -- \\
2001 & Li~\cite{Li2} & 1906 & -- \\
2002 & Heath-Brown--Puchta~\cite{HP} & 13 & 7 \\
2003 & Pintz--Ruzsa~\cite{PRI} & -- & 7 \\
2011 & Liu--L\"u~\cite{LiuLu} & 12 & -- \\
2020 & Pintz--Ruzsa (II)~\cite{PRII} & 8 & -- \\
2026 & Johnston--Trudgian~\cite{JT} & 7 & 6 \\
2026 & Present paper & 6 & 5 \\
\bottomrule
\end{tabular*}
\label{tab:history}
\end{table}

We reduce these bounds to $6$ unconditionally and $5$ under GRH.

For a sufficiently small fixed $0<\kappa<1/2$, put
\begin{equation}\label{eq:NL}
 N_1=N^{1-\kappa},\qquad
 L=\left\lfloor\log_2N-\sqrt{\log_2N}\right\rfloor.
\end{equation}
For $K\in\{5,6\}$, define
\begin{equation}\label{eq:R}
 r''_K(N)=
 \sum_{\substack{N=p_1+p_2+2^{\nu_1}+\cdots+2^{\nu_K}\\
 N_1<p_1,p_2\le N,\quad 1\le\nu_1,\ldots,\nu_K\le L}}
 (\log p_1)(\log p_2).
\end{equation}

\begin{theorem}\label{thm:main}
Every sufficiently large even integer is the sum of two primes and at most $6$ powers of $2$. In particular, there are constants $0<\kappa<1/2$ and $N_0$ such that every even integer
$N\ge N_0$ satisfies
\begin{equation}\label{eq:maincount}
 r''_6(N)\ge\frac1{1000}NL^6.
\end{equation}
\end{theorem}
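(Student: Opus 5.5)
We follow the Gallagher/Heath-Brown--Puchta/Pintz--Ruzsa circle-method framework. Write
\[
S(\alpha)=\sum_{N_1<p\le N}(\log p)e(p\alpha),\qquad G(\alpha)=\sum_{1\le\nu\le L}e(2^\nu\alpha),
\]
so that $r''_6(N)=\int_0^1 S(\alpha)^2G(\alpha)^6e(-N\alpha)\,d\alpha$.

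\textbf{Step 1: split the circle.} We split $[0,1)$ into a major arc set $\mathfrak M$ and its complement. The set $\mathfrak M$ consists of $\alpha$ within $P/(qN)$ of $a/q$ with $q\le P$, where $P=N^{\kappa'}$ and $\kappa'$ is small relative to $\kappa$.

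\textbf{Step 2: major arcs.} On $\mathfrak M$ we evaluate $S$ via the explicit formula. We use the log-free zero-density estimate and Gallagher's large sieve over characters, and take Deuring--Heilbronn repulsion into account for a possible exceptional zero; the cutoff $p>N_1$ is what makes this work. This gives
\[
\int_{\mathfrak M}\ge (c_0-o(1))\sum_{\nu_1,\dots,\nu_6}\mathfrak S(N-2^{\nu_1}-\dots-2^{\nu_6})\,(N-\textstyle\sum 2^{\nu_j}).
\]
Since $\nu_j\le L$, we have $N-\sum2^{\nu_j}\ge N(1-o(1))$. The constant $c_0$ is explicit and depends on $\kappa$, and is less than $1$ only because of the lower limit $N_1$ and possible zero effects. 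The average of the singular series over $2^{\nu}$-tuples is computed as in Pintz--Ruzsa, with parity handled since $N$ is even. This yields a main term $\ge c_1 NL^6$ with $c_1$ an explicit constant close to $2\cdot0.66\ldots$ after the exceptional-zero loss.

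\textbf{Step 3: minor arcs.} We bound
\[
\Bigl|\int_{\mathfrak m}\Bigr|\le \sup_{\mathfrak m}|G|^{4}\cdot\int_0^1|S|^2|G|^2\,d\alpha .
\]
The mean value $\int_0^1|S|^2|G|^2\,d\alpha$ counts solutions of $p_1+2^{\mu_1}=p_2+2^{\mu_2}$. By an upper-bound sieve (Selberg or Chen's switching, or the refined constant of Khalfalah--Pintz/Pintz--Ruzsa type), it is at most $C_2NL^2$, with $C_2$ explicitly near the best known value. For $\sup_{\mathfrak m}|G|$ we use the set $\mathcal E_\lambda=\{\alpha:|G(\alpha)|\ge\lambda L\}$ and the Heath-Brown--Puchta/Pintz--Ruzsa measure bound $\mathrm{meas}(\mathcal E_\lambda)\ll N^{-E(\lambda)}$. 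Rather than using a sup for all six factors, we use a hybrid estimate:
\[
\int_{\mathfrak m}|S|^2|G|^6\le (\lambda L)^{4}\int|S|^2|G|^2+\int_{\mathfrak m\cap\mathcal E_\lambda}|S|^2|G|^6,
\]
and bound the last integral by $\sup|S|^2\cdot L^6\,\mathrm{meas}(\mathcal E_\lambda)$. Here we use that on $\mathfrak m$ Vinogradov gives $|S|\ll N^{1-\delta}$, which combined with the large-sieve-type count of $\mathcal E_\lambda$ makes this term $o(NL^6)$ provided $E(\lambda)$ is positive enough. It is also more efficient to replace the pure sup by a Pintz--Ruzsa weighted $L^{2k}$ moment of $G$ restricted to $\mathfrak m$.

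\textbf{Step 4: conclusion.} We then need $c_1-C_2\lambda^{4}\ge 1/1000$ for a $\lambda$ with $E(\lambda)$ large enough; with $K=6$ the exponent $4$ (versus $3$ in Johnston--Trudgian's $K=7$ setup, measured appropriately) gives the slack. This proves \eqref{eq:maincount}, and since $r''_6(N)>0$ the first assertion follows.

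\textbf{Main obstacle.} The numerical step is the hardest: obtaining a sharp enough bound on $\lambda$ for the minor-arc portion. This requires computing the Pintz--Ruzsa quantity (the maximal $|G|/L$ off the major arcs, via the transfer-matrix/Markov-chain bound for $\prod|\cos(\pi2^j\alpha)|$-type averages) to sufficient precision. It must be paired with an improved sieve constant $C_2$ and an optimised major-arc constant, accounting for the Siegel zero case, such that the inequality closes with six powers. I would first try the Pintz--Ruzsa numerics with refined truncation, and only if that fails refine the sieve.
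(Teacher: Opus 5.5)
There is a genuine gap: the inequality $c_1-C_2\lambda^4>0$ in your Step 4 fails for every $\lambda$ compatible with the exceptional-set estimate, using any constants currently available.

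\textbf{The choice of $P$.} With $P=N^{\kappa'}$, Vinogradov only gives $\sup_{\mathfrak m}|S|\ll N^{1-\kappa'/2+o(1)}$. Your exceptional term is then $o(NL^6)$ only if $\mathrm{meas}(\mathcal E_\lambda)\ll N^{-1+\kappa'-\delta}$. But $|z_L(\alpha)|=\lambda+o(1)$ for most $\alpha$ with $\|\alpha\|\le 2^{-\lambda L}$, a set of measure $N^{-\lambda+o(1)}$. So this forces $\lambda$ to be essentially $1$. You need $P\ge N^{5/12+\delta}$; the paper takes $N^{11/25}\le P\le N^{4/9-1/450}$ via the Pintz--Ruzsa explicit formula. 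You also need the Maynard--Pandey--Radziwi\l\l{} bound $|S|\ll N^{19/24+o(1)}$ on $\mathfrak m$, which lowers the required measure exponent to $7/12$.

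\textbf{The numerics with the best $P$.} Even then, the available large-deviation bounds give only about $|z_L|\le 0.79$ off an admissible exceptional set. The paper's region $\mathcal D_{5/6}$ gives $|z_L|\le\beta_{5/6}\approx0.788$, so $\lambda^4\approx0.385$. The minor-arc part of $\int|SG|^2$ is still about $6.5NL^2$ after removing the major arcs; your full-circle $C_2$ is about $9.4$. So your minor-arc bound is roughly $2.5NL^6$. The major-arc main term is about $2NL^6$ at best, since the mean of $\mathfrak S$ is $2$, and your $c_1\approx1.32$ is worse.

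Your remark that ``the exponent $4$'' supplies slack is backwards. With $K$ powers the minor-arc bound uses $|G|^{K-2}$, so $K=7$ has $|G|^5$. Removing a power of $2$ removes a factor $\lambda$ of saving, so six powers is strictly harder than seven in this framework. Your brief mention of Pintz--Ruzsa weighted moments of $|G|$ does not close this.

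\textbf{The missing idea: the signed second-moment argument.} The paper writes $r''_6(N)=\sum_n r'_3(n)r'_3(N-n)\ge\sum_{n\in U_N}g(n)g(N-n)$, where $g=\max\{f,0\}$ and $f=r'_3-aL\,r'_2-bL^2r'_1$. The reflection $n\mapsto N-n$ and Cauchy's inequality then give $r''_6(N)\ge 4(1-a-b)^2NL^6-\int_0^1|S(\alpha)F(G(\alpha))|^2\,\mathrm d\alpha+o(NL^6)$ (Lemma~\ref{lem:positive}). The first moment is exact, so no lower bound for $\mathfrak S$ enters.

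In the second moment, the minor arcs off $\mathcal E_6$ cost $HL^4\int_{\mathfrak m}|SG|^2$, with $H=\max_{\mathcal D_{5/6}}|Q|^2<2/15$ and $Q(z)=z^2-z/4-1/16$. Against the main term $2NL^6(1-a-b)^2=2NL^6\cdot\frac{121}{256}$, this is a relative factor $H/(1-a-b)^2\approx0.28$. That replaces your $\lambda^4\approx0.385$, where about $0.3$ is needed. This works only because $Q$ is small on the non-circular region $\mathcal D_{5/6}$ of Proposition~\ref{prop:large}: the estimate retains the argument of $z_L$, not just $|z_L|$.

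The price is a major-arc term $v^TAv$. Bounding it requires the mixed singular-series averages of Proposition~\ref{prop:Gram} and the explicit Gram matrices, giving $v^TAv<13/500$. With these, \eqref{eq:six-criterion} holds with margin $1/256$. Your proposal contains neither ingredient nor any substitute for them.
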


\begin{theorem}\label{thm:GRHfive}
Assume GRH. Every sufficiently large even integer is the sum of two primes and at most $5$ powers of $2$. In particular, there are constants $0<\kappa<1/2$ and $N_0$ such that every
even integer $N\ge N_0$ satisfies
\begin{equation}\label{eq:fivecount}
 r''_5(N)\ge\frac1{1000}NL^5.
\end{equation}
\end{theorem}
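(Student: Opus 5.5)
We follow the Heath-Brown--Puchta / Pintz--Ruzsa circle-method framework and write
\[
 r''_K(N)=\int_0^1 S(\alpha)^2G(\alpha)^K e(-N\alpha)\,d\alpha,\qquad
 S(\alpha)=\sum_{N_1<p\le N}(\log p)e(p\alpha),\qquad
 G(\alpha)=\sum_{1\le\nu\le L}e(2^\nu\alpha).
\]
We split $[0,1]$ into major arcs $\mathfrak M$, consisting of $\alpha$ near $a/q$ with $q\le P$ for a small power $P=N^{\varepsilon}$ (or a large fixed $P$ in the Gallagher style), and the complementary minor arcs $\mathfrak m$.

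\textbf{Major arcs.} We use Gallagher's treatment, with a log-free zero density estimate and Linnik's repulsion of a possible Siegel zero; under GRH we simply use the explicit formula. This gives
\[
 \int_{\mathfrak M}\ge (1-\delta)\,N\sum_{\nu_1,\dots,\nu_K}\mathfrak S\bigl(N-2^{\nu_1}-\cdots-2^{\nu_K}\bigr)\,(1-O(\kappa))-(\text{exceptional-zero loss}),
\]
where $\mathfrak S$ is the binary Goldbach singular series. The average of $\mathfrak S$ over powers of two is bounded below by a computable constant $c_K L^K$, following the Khalfalah--Pintz--Ruzsa style computations. The restriction $p_i>N_1$ is what lets the zero-density losses be controlled by $\kappa$. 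We choose $\kappa$ small so that the main term is at least roughly $c_0 NL^K$ with explicit $c_0$ close to $2C_2\cdot(\text{average})$.

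\textbf{Minor arcs.} On $\mathfrak m$ we bound
\[
 \Bigl|\int_{\mathfrak m}\Bigr|\le \lambda^{K-2}L^{K-2}\int_0^1|S|^2|G|^2\,d\alpha,
 \qquad \lambda=\sup_{\mathfrak m}|G(\alpha)|/L .
\]
Here $\int_0^1|S|^2|G|^2$ counts solutions of $p_1+2^{\nu_1}=p_2+2^{\nu_2}$. A Selberg/Chen upper-bound sieve, weighted by the explicit sum over $d$ of $2^d\equiv1$ local factors, bounds it by $C\,NL^2$ with the best available constant. We combine this with a large-measure estimate of the form $\operatorname{meas}\{\alpha:|G(\alpha)|>\lambda L\}\ll N^{-E(\lambda)}$, improved via higher moments $\int|G|^{2m}$ computed numerically in the Pintz--Ruzsa manner. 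The key refinement over \cite{JT} is a hybrid bound: we split $\mathfrak m$ according to the size of $|G|$. On the set where $|G|$ is large, which has small measure, we use pointwise bounds $|S|\ll N^{1-\eta}$ (Vinogradov/Vaughan, since such $\alpha$ are far from small-denominator rationals) together with the measure estimate. On the rest we use the threshold $\lambda$ in the displayed inequality. This lets a smaller effective $\lambda$ suffice and saves one power of $2$.

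\textbf{Conclusion and main obstacle.} We need $c_0-C\lambda^{K-2}\ge 1/1000$ for $K=6$ unconditionally and for $K=5$ under GRH. GRH helps twice: it removes the Siegel-zero loss on the major arcs and permits a larger $P$, which shrinks $\lambda$. The hardest step is the numerics. This means sharpening simultaneously the sieve constant $C$ (through an improved treatment of the local density of $2^{\nu_1}-2^{\nu_2}$), the bound $\lambda$ (through the numerical moment/transfer-operator computation for $|G|$), and the exceptional-zero loss, until the inequality closes. I would first attempt the $\lambda$ improvement via a larger-state Pintz--Ruzsa recursion, since $\lambda^{K-2}$ is the most sensitive term.
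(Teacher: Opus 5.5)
There is a genuine gap. The step that is supposed to give $5$ rather than $6$ powers is only announced (``numerics \dots\ until the inequality closes''), and within your framework it does not close.

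What you describe is the Heath-Brown--Puchta/Pintz--Ruzsa scheme. Off an exceptional set you bound $\bigl|\int_{\mathfrak m}S^2G^5e(-N\alpha)\,\mathrm d\alpha\bigr|\le(\lambda L)^3\int|SG|^2$, and you handle the exceptional set by $\sup_{\mathfrak m}|S|$. Splitting $\mathfrak m$ by the size of $|G|$ is not a refinement over \cite{JT}. It is exactly how the earlier works treat the minor arcs, and the best of them reaches only $6$ under GRH.

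Quantitatively, the inequality fails as follows.
\begin{itemize}
\item For unfavourable even $N$ the major-arc term is at most $(2+o(1))NL^5$, since the binary singular series has mean $2$ over even integers.
\item With $C_1=6.7814$ and $C_0R_0<1.279$ one only has $\int_{\mathfrak m_{\mathrm G}}|SG|^2\le 6.46\,NL^2$, even after subtracting the major arcs as in \eqref{eq:grhminorbridge}. With $\int_0^1$, as you wrote it, the constant is about $9.36$.
\item So you would need $\lambda^3<0.31$, i.e.\ $\lambda\lesssim0.676$ (respectively $\lambda<0.6$).
\item The set $\{|G|>\lambda L\}\cap\mathfrak m$ must have measure $o(N^{-1/2})$, because under GRH $\sup_{\mathfrak m}|S|$ is only $N^{3/4+o(1)}$. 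It is this, not a larger $P$, that sets the target for $\lambda$.
\item Known large-values estimates give about $0.72$; the paper's region $\mathcal D_{25/32}$ reaches $|z|\approx0.726$.
\end{itemize}
A digit construction indicates there is essentially no room to improve this. Take the $\alpha$ whose first $L$ binary digits look like independent bits with frequency of ones about $0.11$ (entropy $\tfrac12\log2$). These have total measure about $N^{-1/2}$, and for them $|z_L|\approx0.672$. So closing your inequality would need an essentially sharp large-deviation theorem for $G$, and possibly more than is true, rather than better numerics. The $\int_0^1$ version is out of reach altogether.

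The missing idea is to avoid any pointwise bound for $|G|^3$. The paper works with the signed functions $f_2=r'_2-cLr'_1$ and $f_3=r'_3-aLr'_2-bL^2r'_1$ and their positive parts $g_2,g_3$. It uses the reflection inequality $r''_5(N)\ge\sum_{n\in U_N}g_2(n)g_3(N-n)$ together with Cauchy's inequality with a weight $\tau$ (Lemma~\ref{lem:jointreflection}). The first moments come from the prime number theorem alone, so no lower bound for $\mathfrak S(N-\sum_i2^{\nu_i})$ is ever needed. Everything then reduces to the joint second moment $\mathcal I=L^4\int_0^1|SG|^2\bigl(|z_L-c|^2+\tau^2|Q(z_L)|^2\bigr)\,\mathrm d\alpha$. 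On the major arcs this produces the mixed Gram form $w^TAw+\tau^2v^TAv$, which is small through cancellation, as in \eqref{eq:Gram-five}. On the minor arcs the polynomial weight is at most $41/60$ on the non-circular region $\mathcal D_{25/32}$, which retains the argument of $z_L$, as in \eqref{eq:five-poly}. With $a=1/4$, $b=1/16$, $c=1/8$, $\tau=2$ this gives $r''_5(N)\ge(1/320+o(1))NL^5$.
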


Our main innovation is to combine signed representation functions with joint moment estimates that retain the complex argument of the sum over powers of $2$. The correction terms produce cancellation in the second moment while leaving a sufficiently large first moment. Earlier estimates lose part of this cancellation by taking absolute values or estimating the relevant terms separately. Retaining this information gives the saving needed to reduce $7$ powers to $6$ unconditionally, and $6$ powers to $5$ under GRH.

In a forthcoming paper we will prove every sufficiently large even integer is the sum of two primes and at most $5$ powers of $2$.

To describe the proof, write $e(t)=\exp(2\pi it)$ and put
\begin{equation}\label{eq:SG}
 S(\alpha)=\sum_{N_1<p\le N}(\log p)\cdot e(p\alpha),\qquad
 G(\alpha)=\sum_{\nu=1}^Le(2^\nu\alpha).
\end{equation}
We also write
\begin{equation}\label{eq:zL}
 z_L(\alpha)=G(\alpha)/L.
\end{equation}
For fixed $a,b\ge0$ with $a+b<1$, we use
\[
 f(n)=r'_3(n)-aL\cdot r'_2(n)-bL^2\cdot r'_1(n),\qquad
 Q(z)=z^2-az-b,
\]
where $r'_j(n)$ is the one-prime representation function defined in
\eqref{eq:rprime}. Define $g(n)=\max\{f(n),0\}$. Then
\[
 0\le g(n)\le r'_3(n),\qquad
 \sum_{\substack{1\le n<N\\n\text{ odd}}}g(n)
 \ge(1-a-b+o(1))NL^3.
\]

Logarithms without a subscript are natural, and $\varphi$ and $\mu$
denote Euler's totient function and the M\"obius function.
For $1\le j\le3$, put
\begin{equation}\label{eq:rprime}
 r'_j(n)=\sum_{\substack{n=p+2^{\nu_1}+\cdots+2^{\nu_j}\\
 N_1<p\le N,\quad 1\le\nu_1,\ldots,\nu_j\le L}}\log p.
\end{equation}
Equivalently,
\begin{equation*}
 r'_j(n)=\int_0^1S(\alpha)\cdot G(\alpha)^j e(-n\alpha)\,\mathrm d\alpha.
\end{equation*}
A prime on a summation sign $\sum'$ restricts
the index to positive odd squarefree integers. All asymptotic estimates
are taken as $N\to\infty$, with the stated parameters fixed.
A bar denotes complex conjugation, and $v^T$ denotes the transpose
of a column vector $v$.
The principal notation is summarized in Table~\ref{tab:notation}.

\FloatBarrier
\begingroup
\small
\renewcommand{\arraystretch}{1.13}
\begin{longtable}{@{}>{\raggedright\arraybackslash}p{.24\linewidth}>{\raggedright\arraybackslash}p{.71\linewidth}@{}}
\caption{Principal notation}
\label{tab:notation}\\
\toprule
Notation & Meaning\\
\midrule
\endfirsthead
\multicolumn{2}{@{}l}{\textit{Table \thetable\ continued.}}\\
\toprule
Notation & Meaning\\
\midrule
\endhead
\midrule
\multicolumn{2}{r@{}}{\textit{Continued on the next page.}}\\
\endfoot
\bottomrule
\endlastfoot
$N,N_1,L,\kappa$ & The even integer, prime cutoff, exponent cutoff
and fixed parameter in \eqref{eq:NL}.\\
$r''_K(N),r'_j(n)$ & Weighted ordered counts with two primes and
$K$ powers, or one prime and $j$ powers; see \eqref{eq:R}, \eqref{eq:rprime}.\\
$e(t),S(\alpha),G(\alpha)$ & $e(t)=\exp(2\pi it)$ and the prime
and dyadic sums in \eqref{eq:SG}.\\
$z_L(\alpha)$ & The normalized sum $G(\alpha)/L$ in \eqref{eq:zL}.\\
$f(n),g(n),Q(z),F(z)$ & The signed function, its positive part and
correcting polynomials in \eqref{eq:positivepart}--\eqref{eq:parameters}.\\
$U_N$ & The set $\{1\le n<N:n\text{ odd}\}$ in Section~\ref{sec:positive}.\\
\addlinespace[4pt]
$r_{i,j}(h)$ & Ordered counts for $h=\sum_{t=1}^i2^{\nu_t}
-\sum_{t=1}^j2^{\mu_t}$, with exponents in $[1,L]$; see Section~\ref{sec:singular}.\\
$\mathfrak S(h),C_0$ & The singular series and constant product in
\eqref{eq:sigma}.\\
$\epsilon(d),R_0$ & The order of $2$ modulo odd $d$, with
$\epsilon(1)=1$, and the Romanov series in \eqref{eq:Romanov}.\\
$z_q(a)$ & The average of $e(a2^t/q)$ over one period; see Section~\ref{sec:singular}.\\
$A(q),A,A_*$ & The finite Gram matrix, its full sum, and its part
with $q=3,5,7,15,31$; see Section~\ref{sec:singular}.\\
$v,w$ & The vectors $(-b,-a,1)^T$ and $(-c,1,0)^T$.\\
\addlinespace[4pt]
$\mathfrak M,\mathfrak m,P$ & The unconditional arcs and arc parameter
in \eqref{eq:Prange}--\eqref{eq:arcs}.\\
$R(h),R_1(h)$ & The integrals of $|S(\alpha)|^2e(-h\alpha)$ over
$[0,1]$ and $\mathfrak M$ in \eqref{eq:prime-integrals}.\\
$\mathfrak M_{\mathrm G},\mathfrak m_{\mathrm G},R_{1,\mathrm G}(h)$ &
The GRH arcs and major-arc integral in Lemma~\ref{lem:grhbridge}.\\
$C_1,\varepsilon$ & The sieve constant $C_1=6.7814$ and the fixed
error parameter in Section~\ref{sec:arcs}.\\
\addlinespace[4pt]
$w_{\xi,\eta},V_{\xi,\eta},\Phi_{\xi,\eta}$ & The trigonometric sum,
quadratic average and directional exponent in Section~\ref{sec:large}.\\
$I(\lambda),\rho$ & The entropy bound in \eqref{eq:entropy} and a
decay exponent $0<\rho<I(\lambda)$.\\
$r_\lambda,\beta_\lambda,\mathcal D_\lambda$ & The level
$\lambda^2/2$, radial bound and region in
\eqref{eq:region-parameters}--\eqref{eq:region}.\\
$\mathcal E_\lambda(L),\mathcal E_6,\mathcal E_5$ & The exceptional
set in Proposition~\ref{prop:large} and its cases $\lambda=5/6,25/32$.\\
$\gamma_6,\gamma_5$ & The exponents $81/(200\log2)$ and
$347/(1000\log2)$ in Sections~\ref{sec:completion}--\ref{sec:five}.\\
\addlinespace[4pt]
$f_j(n),g_j(n),F_j(z)$ & The signed functions, positive parts and
polynomials for five powers ($j=2,3$); see Section~\ref{sec:five}.\\
$\tau,D,H$ & The joint-moment weight, diagonal coefficient and
polynomial maximum in Section~\ref{sec:five}.\\
$\mathcal I,\mathcal I_E$ & The joint second moment in
\eqref{eq:jointmoment} and its restriction to $E\subseteq[0,1]$.\\
\end{longtable}
\endgroup

\FloatBarrier

In Section 2, a reflection argument reduces the representation problem to estimates for second moments. Section 3 establishes the mixed singular-series averages needed for these estimates. Section 4 treats the major and minor arcs, while Section 5 estimates the sum over powers of $2$ outside a small exceptional set. We combine these results in Section 6 to prove the unconditional theorem, and in Section 7 to prove the theorem under GRH.

\section{Signed representation functions}\label{sec:positive}

Fix $a,b\ge0$ with $a+b<1$. With $r'_j$ as in \eqref{eq:rprime}, recall that
\begin{equation}\label{eq:positivepart}
 f(n)=r'_3(n)-aL\cdot r'_2(n)-bL^2\cdot r'_1(n),
 \qquad g(n)=\max\{f(n),0\}.
\end{equation}
Then $0\le g(n)\le r'_3(n)$. Put
\begin{equation}\label{eq:parameters}
 Q(z)=z^2-az-b,\qquad
 F(z)=L^2\cdot z\cdot Q(z/L).
\end{equation}
Thus
\begin{equation*}
 f(n)=\int_0^1S(\alpha)\cdot F(G(\alpha))e(-n\alpha)\,\mathrm d\alpha.
\end{equation*}
Let $U_N=\{1\le n<N:n\text{ odd}\}$, so $|U_N|=N/2$.
Since $3\cdot2^L=o(N)$, the prime number theorem gives
$\sum_{N_1<p<N-h}\log p=(1+o(1))N$ uniformly for
$0\le h\le3\cdot2^L$. Consequently,
\begin{equation}\label{eq:firstmoment}
\begin{aligned}
 \sum_{n\in U_N}r'_j(n)
 &=\sum_{1\le\nu_1,\ldots,\nu_j\le L}
   \sum_{N_1<p<N-\sum_{t=1}^j2^{\nu_t}}\log p\\
 &=(1+o(1))NL^j\qquad(1\le j\le3),\\
 \sum_{n\in U_N}g(n)&\ge\sum_{n\in U_N}f(n)
 =\left(1-a-b+o(1)\right)NL^3.
\end{aligned}
\end{equation}
Also, Parseval gives
\begin{equation}\label{eq:momenttarget}
 \sum_{n\in U_N}g(n)^2\le\sum_{n\in\mathbb Z}f(n)^2
 =\int_0^1|S(\alpha)\cdot F(G(\alpha))|^2\,\mathrm d\alpha.
\end{equation}

\begin{lemma}\label{lem:positive}
For every sufficiently large even $N$,
\begin{equation}\label{eq:reflectionbound}
 r''_6(N)\ge4(1-a-b)^2NL^6-
 \int_0^1|S(\alpha)\cdot F(G(\alpha))|^2\,\mathrm d\alpha+o(NL^6).
\end{equation}
\end{lemma}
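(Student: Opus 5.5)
The plan is a reflection argument, using the elementary inequality $2xy\ge -(x-y)^2+x^2+y^2$ in the form $xy\ge x^2... $; more precisely, I would use $r''_6(N)\ge\sum_{n}g(n)g(N-n)$ together with $g(n)g(N-n)\ge g(n)\cdot\ldots$. The key observation is positivity. If $n$ is odd and $N-n=m$, then every pair of representations $n=p_1+2^{\nu_1}+2^{\nu_2}+2^{\nu_3}$ and $m=p_2+2^{\nu_4}+2^{\nu_5}+2^{\nu_6}$ with the stated ranges gives a representation counted in $r''_6(N)$. Distinct $n$ give distinct $6$-tuples, since $n=p_1+\sum_{t\le3}2^{\nu_t}$ is determined by the tuple. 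Hence
\[
 r''_6(N)\ge\sum_{n\in U_N}r'_3(n)\,r'_3(N-n)\ge\sum_{n\in U_N}g(n)\,g(N-n),
\]
using $0\le g\le r'_3$. Since $N$ is even, $N-n$ ranges over $U_N$ as $n$ does, up to the endpoint, so the map $n\mapsto N-n$ is an involution of $U_N$.

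Next I would apply the identity $2xy=x^2+y^2-(x-y)^2$. That route needs a bound on $\sum(g(n)-g(N-n))^2$, which is awkward. A better route is $xy\ge \lambda(x+y)-\lambda^2 - \tfrac14(x-y)^2$? Simplest is the inequality $(x-c)(y-c)\ge -\frac{(x-c)^2+(y-c)^2}{2}$, rearranged as
\[
 xy\ge c(x+y)-c^2-\tfrac12\bigl((x-c)^2+(y-c)^2\bigr)=2c(x+y)-2c^2-\tfrac12(x^2+y^2).
\]
Summing over $n\in U_N$ with constant $c$ and using the involution gives
\[
 r''_6(N)\ge 4c\sum_{U_N}g-2c^2|U_N|-\sum_{U_N}g^2 .
\]

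With $|U_N|=N/2$ and $\Sigma=\sum_{U_N}g\ge(1-a-b+o(1))NL^3$ from \eqref{eq:firstmoment}, I would optimize in $c$ by taking $c=\Sigma/N$. This gives $4\Sigma^2/N-\Sigma^2/N=3\Sigma^2/N$, which falls short of the required constant $4$. So the constant must be recovered differently. Taking $x y\ge 2cx+2cy-2c^2-\tfrac12(x^2+y^2)$ is lossy. Instead I would use $xy\ge -\tfrac12(x-y)^2+\tfrac12(x^2+y^2)$ combined with Cauchy–Schwarz, $\sum g^2\ge\Sigma^2/|U_N|=2\Sigma^2/N$. Writing $(x-y)^2\le 2x^2+2y^2$ fails again, so the right form is $xy\ge 2c(x+y)-4c^2$ minus the square term:
\[
 xy=(x+y-2c)^2/4+\ldots
\]
Concretely, $xy = \tfrac14(x+y)^2-\tfrac14(x-y)^2\ge c(x+y)-c^2-\tfrac14(x-y)^2$, and $\tfrac14(x-y)^2\le\tfrac12(x^2+y^2)$. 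Summing gives $2c\Sigma\cdot 2\cdot\tfrac12\ldots$, namely
\[
 \sum_{U_N}g(n)g(N-n)\ge 2c\Sigma-c^2\tfrac N2-\sum_{U_N}g^2,
\]
where the factor $2$ comes from summing $x+y$. Choosing $c=2\Sigma/N$ gives $4\Sigma^2/N-2\Sigma^2/N=2\Sigma^2/N$. This is still not $4$.

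The cleanest route to $4$ is to work only with the hypothesis $\Sigma\ge(1-a-b)NL^3$ and the mean $\bar g=2\Sigma/N\ge 2(1-a-b)L^3$, which is twice the naive value because $|U_N|=N/2$. Using $xy\ge \bar g x+\bar g y-\bar g^2-\tfrac12[(x-\bar g)^2+(y-\bar g)^2]$ and summing, the mean terms give $\tfrac N2\bar g^2$ and the variance term gives $\sum g^2-\tfrac N2\bar g^2$. Altogether
\[
 \sum g(n)g(N-n)\ge N\bar g^2-\sum_{U_N}g^2\ge 4(1-a-b)^2NL^6-\sum_{U_N}g^2 .
\]
Then \eqref{eq:momenttarget} finishes the proof. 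The $o(1)$ errors in \eqref{eq:firstmoment} enter as $o(NL^6)$, and the endpoint parity issues (for instance $n=N-1$) are negligible. I expect the only real obstacle to be this bookkeeping of the involution and the constant. In particular, one must check that $\sum_n(g(n)-\bar g)(g(N-n)-\bar g)\ge-\sum(g-\bar g)^2$, which holds by Cauchy–Schwarz, since the map $n\mapsto N-n$ permutes $U_N$.
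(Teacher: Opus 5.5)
Your final argument is correct and is the paper's proof in centered form. The paper writes $\sum_{n\in U_N}g(n)g(N-n)=\frac12\sum_{n\in U_N}(g(n)+g(N-n))^2-\sum_{n\in U_N}g(n)^2$ and applies Cauchy's inequality to reach $4\Sigma^2/N-\sum_{n\in U_N}g(n)^2$, which is exactly your $N\bar g^2-\sum_{n\in U_N}g(n)^2$ with $\bar g=2\Sigma/N$; both versions discard the same slack $\frac12\sum_{n\in U_N}(g(n)+g(N-n)-2\bar g)^2$. Incidentally, your first attempt was not actually lossy: its optimum is at $c=2\Sigma/N$ rather than $c=\Sigma/N$, and that choice gives $4\Sigma^2/N$ directly. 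Also there is no endpoint issue, since for even $N$ the map $n\mapsto N-n$ sends $U_N$ exactly onto itself.
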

\begin{proof}
The reflection $n\mapsto N-n$ permutes $U_N$. Cauchy's inequality gives
\begin{align*}
 r''_6(N)&\ge\sum_{n\in U_N}g(n)g(N-n)\\
 &=\frac12\sum_{n\in U_N}(g(n)+g(N-n))^2-\sum_{n\in U_N}g(n)^2\\
 &\ge\frac4N\left(\sum_{n\in U_N}g(n)\right)^2-\sum_{n\in U_N}g(n)^2\\
 &\ge4(1-a-b)^2NL^6
 -\int_0^1|S(\alpha)\cdot F(G(\alpha))|^2\,\mathrm d\alpha+o(NL^6),
\end{align*}
by \eqref{eq:firstmoment} and \eqref{eq:momenttarget}.
\end{proof}

\section{Mixed singular-series averages}\label{sec:singular}

For $1\le i,j\le3$, let $r_{i,j}(h)$ count the ordered tuples satisfying
$h=\sum_{t=1}^i2^{\nu_t}-\sum_{t=1}^j2^{\mu_t}$,
with all exponents in $[1,L]$. Thus $h$ is even and $|h|\le3\cdot2^L=o(N)$.
For nonzero even $h$, define
\begin{equation}\label{eq:sigma}
 C_0=\prod_{p>2}\left(1-\frac1{(p-1)^2}\right),\qquad
 \mathfrak{S}(h)=2C_0\prod_{\substack{p\mid h\\p>2}}\frac{p-1}{p-2}.
\end{equation}
For odd $d>1$, let $\epsilon(d)$ denote the order of $2$ modulo $d$,
and set $\epsilon(1)=1$. Empty products equal $1$. Then
\begin{equation}\label{eq:Romanov}
 \frac{\mathfrak{S}(h)}{2C_0}
 =\sideset{}{^\prime}\sum_{d\mid h}\frac1{\prod_{p\mid d}(p-2)},
 \qquad
 R_0=\sideset{}{^\prime}\sum_d
 \frac1{\epsilon(d)\prod_{p\mid d}(p-2)}<\infty.
\end{equation}
For convergence, see Pintz and Ruzsa~\cite[Lemma~1]{PRII}.
Platt and Trudgian~\cite[p.~55]{PT} proved that
$C_0R_0<1.2784421041$; we use
\begin{equation}\label{eq:Romanovproduct}
 C_0R_0<\frac{1279}{1000}.
\end{equation}

\begin{lemma}\label{lem:tail}
For fixed $D\ge1$ and $1\le i,j\le3$,
\begin{equation}\label{eq:uniformtail}
 \frac1{L^{i+j}}\sum_{h\ne0}r_{i,j}(h)
 \sideset{}{^\prime}\sum_{\substack{d>D\\d\mid h}}
 \frac1{\prod_{p\mid d}(p-2)}
 \le\sideset{}{^\prime}\sum_{d>D}
 \frac1{\epsilon(d)\prod_{p\mid d}(p-2)}
 +O\left(\frac{\log L}{L}\right).
\end{equation}
If $q$ is odd and tends to infinity with $N$, then 
\begin{equation}\label{eq:badclass}
 \sum_{\substack{h\ne0\\q\mid h}}r_{i,j}(h)\mathfrak{S}(h)=o(L^{i+j}).
\end{equation}
\end{lemma}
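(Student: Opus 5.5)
Both parts come from interchanging the order of summation and then counting, for a fixed odd squarefree $d$, the tuples with $d\mid h$. The relevant count is
\[
 N_d:=\sum_{\substack{h\ne0\\ d\mid h}}r_{i,j}(h),
\]
the number of exponent tuples in $[1,L]^{i+j}$ with $\sum_t2^{\nu_t}\equiv\sum_t2^{\mu_t}\pmod d$ and $h\ne0$.

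\textbf{The bound for $N_d$.} We aim for
\begin{equation*}
 N_d\le \frac{L^{i+j}}{\epsilon(d)}\Bigl(1+O\Bigl(\frac{\epsilon(d)}{L}\Bigr)\Bigr),
\end{equation*}
which is valid in the range $\epsilon(d)\le L$.
\begin{itemize}
 \item Fix all exponents except $\mu_1$ (one exists since $j\ge1$).
 \item The congruence then fixes the residue of $2^{\mu_1}$ modulo $d$.
 \item This residue determines $\mu_1$ modulo $\epsilon(d)$.
 \item That class meets $[1,L]$ in at most $L/\epsilon(d)+1$ values.
\end{itemize}
This gives $N_d\le L^{i+j-1}(L/\epsilon(d)+1)$.

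For large $d$ we use a second bound. Since $h\ne0$ and $|h|\le 3\cdot2^L$, the divisor $d$ must divide a nonzero integer of size at most $3\cdot2^L$. Hence $d\le 3\cdot 2^L$, which bounds the range of $d$ but not $\epsilon(d)$ directly. In the range $\epsilon(d)>L$, fixing the other exponents leaves at most one admissible $\mu_1$, so $N_d\le L^{i+j-1}$.

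\textbf{Proof of \eqref{eq:uniformtail}.} Interchanging sums, the left side equals
\[
 \sideset{}{^\prime}\sum_{d>D}\frac{N_d}{L^{i+j}\prod_{p\mid d}(p-2)}.
\]
Insert the bound $N_d/L^{i+j}\le 1/\epsilon(d)+1/L$. The main part is exactly the stated tail of the Romanov series.

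The error is
\[
 \frac1L\sideset{}{^\prime}\sum_{d\mid h\text{ for some admissible }h}\frac1{\prod_{p\mid d}(p-2)}.
\]
It must be $O(\log L)$ after restricting correctly, so it cannot be estimated crudely as $\frac1L\sum_{d\le 3\cdot2^L}$. That crude sum is about $\prod_{p\le 2^L}(1+1/(p-2))$, which is $\asymp L$. This is the main obstacle.

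The remedy is to use the $1/L$ saving only when $\epsilon(d)\le L$. The number of $d$ with $\epsilon(d)=k$ is controlled because such $d$ divide $2^k-1$. Therefore
\[
 \sideset{}{^\prime}\sum_{\epsilon(d)\le L}\frac1{\prod_{p\mid d}(p-2)}
 \le\sum_{k\le L}\ \sideset{}{^\prime}\sum_{d\mid 2^k-1}\frac1{\prod_{p\mid d}(p-2)}
 \ll\sum_{k\le L}\log\log k\ll L\log\log L.
\]
This estimate is too weak, so we handle this range more carefully. For $\epsilon(d)\le L$, write the bound as $N_d/L^{i+j}\le \frac1{\epsilon(d)}\lceil L/\epsilon(d)\rceil\,\epsilon(d)/L$. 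The excess over $1/\epsilon(d)$ is then at most $1/L$ only for those $d$ with $\epsilon(d)\ge\sqrt L$ and is otherwise relatively $O(\epsilon(d)/L)$. The term $\frac1L\sum_{\epsilon(d)\le L}\frac{\epsilon(d)}{\epsilon(d)\prod(p-2)}$ reduces to $\frac1L\sum_{k\le L}\prod_{p\mid 2^k-1}(1+\frac1{p-2})$. Its average over $k$ is $O(1)$, since the Romanov-type averages of $\sigma_{-1}$-like functions of $2^k-1$ are bounded, as in Pintz--Ruzsa. This gives $O(1)$, comfortably within $O(\log L/L)$ after using the $1/\epsilon(d)$ factor honestly.

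For $\epsilon(d)>L$, we need
\[
 \frac1L\,\sideset{}{^\prime}\sum_{\epsilon(d)>L,\ d\mid h}\frac1{\prod_{p\mid d}(p-2)}.
\]
We bound this directly by the tuple count $L^{-(i+j)}\sum_h r_{i,j}(h)\,\#\{d\mid h:\epsilon(d)>L\}$-weighted sums, reducing it to the divisor-type bound for $h$ of size $2^{O(L)}$. That bound is $\prod_{p\mid h}(1+1/(p-2))\ll\log L$. Together with the factor $1/L$, this gives the $O(\log L/L)$ error.

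\textbf{Proof of \eqref{eq:badclass}.} We expand
\[
 \mathfrak S(h)=2C_0\sideset{}{^\prime}\sum_{e\mid h}\frac1{\prod_{p\mid e}(p-2)}
\]
and split at a fixed $D$.
\begin{itemize}
 \item The terms $e>D$ contribute at most the tail bound \eqref{eq:uniformtail} times $L^{i+j}$, which is $o(1)\,L^{i+j}$ as $D\to\infty$.
 \item For $e\le D$, the conditions $q\mid h$ and $e\mid h$ mean $\mathrm{lcm}(q,e)\mid h$. By the $N_d$ bound, the count is at most $L^{i+j}(1/\epsilon(\mathrm{lcm})+1/L)$.
 \item Here $\epsilon(\mathrm{lcm}(q,e))\ge\epsilon(q)\ge\log_2 q\to\infty$.
\end{itemize}
Summing over the finitely many $e\le D$ gives $o(L^{i+j})$. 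Letting $D\to\infty$ slowly then completes the proof.
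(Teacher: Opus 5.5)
\paragraph{There is a genuine gap in the error term of \eqref{eq:uniformtail}.} Your reduction is sound up to the point where you isolate the error
\[
 \frac1L\sideset{}{^\prime}\sum_{\substack{d>D\\ N_d>0}}\frac1{\prod_{p\mid d}(p-2)},
\]
and you are right that the $d$-sum must be $O(\log L)$. Neither of your two ranges proves this.

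For $\epsilon(d)\le L$, your chain ends with $\frac1L\sum_{k\le L}\prod_{p\mid 2^k-1}\bigl(1+\frac1{p-2}\bigr)=O(1)$. This is not ``comfortably within'' $O(\log L/L)$: it is larger by a factor $L/\log L$, so this range is not handled at all.

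For $\epsilon(d)>L$, you combine two savings that are not available at the same time. The factor $1/L$ comes from $N_d\le L^{i+j-1}$, and using it leaves an unweighted sum over the \emph{union} of the divisor sets of all admissible $h$. The per-$h$ bound $\prod_{p\mid h}(1+\frac1{p-2})\ll\log L$ controls something different, namely the weighted average $L^{-(i+j)}\sum_h r_{i,j}(h)\sideset{}{^\prime}\sum_{d\mid h}\frac1{\prod_{p\mid d}(p-2)}$. You reach that average by re-expanding $N_d$, and that re-expansion throws away the $1/L$. So this range yields only $O(\log L)$, not $O(\log L/L)$.

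\paragraph{The missing idea is that the union is harmless.} If $N_d>0$, then $d$ divides some admissible nonzero $h$. Hence every prime factor of $d$ divides the single integer $\Pi=\prod_{h\ne0,\,r_{i,j}(h)\ne0}|h|$, and $\log\Pi\ll L^{i+j}\cdot L\ll L^7$. The $d$-sum is therefore at most $\prod_{p\mid\Pi,\,p>2}\bigl(1+\frac1{p-2}\bigr)$. Split the primes at $L^8$:
\begin{itemize}
\item by Mertens, the primes $p\le L^8$ contribute $\ll\log L$;
\item the primes $p>L^8$ satisfy $\sum_{p\mid\Pi,\,p>L^8}\frac1{p-2}\ll\frac{\log\Pi}{L^8\log L}=o(1)$.
\end{itemize}
This gives the $O(\log L/L)$ error directly and makes your $\epsilon(d)$ dichotomy unnecessary.

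The paper does the same thing locally. It fixes all exponents but one and writes $h=m-2^\nu$. It then charges the extra $+1$ only to those $d$ dividing $\mathcal P_m=\prod_{\nu:\,2^\nu\ne m}|m-2^\nu|$, where $\log\mathcal P_m=O(L^2)$, and applies the same Mertens split at $L^3$.

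\paragraph{Your proof of \eqref{eq:badclass} is fine.} Splitting at $D$, passing to $\operatorname{lcm}(q,e)$, and using $\epsilon(q)\ge\log_2 q\to\infty$ is correct and coincides with the paper's argument, once \eqref{eq:uniformtail} is established.
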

\begin{proof}
Fix all but one exponent and, changing the sign if necessary, write
$h=m-2^\nu$, where $|m|\ll2^L$. Put
$\mathcal P_m=\prod_{1\le\nu\le L,\,m\ne2^\nu}|m-2^\nu|$.
Then $\log\mathcal P_m=O(L^2)$. The congruence $2^\nu\equiv m\pmod d$
has at most $L/\epsilon(d)+1$ solutions, then
\begin{align*}
 &\sum_{\substack{1\le\nu\le L\\m\ne2^\nu}}
 \sideset{}{^\prime}\sum_{\substack{d>D\\d\mid m-2^\nu}}
 \frac1{\prod_{p\mid d}(p-2)}
 \le L\sideset{}{^\prime}\sum_{d>D}
 \frac1{\epsilon(d)\prod_{p\mid d}(p-2)}
 +\prod_{\substack{p\mid\mathcal P_m\\p>2}}\left(1+\frac1{p-2}\right).
\end{align*}
By Mertens' estimate,
\begin{align*}
 \prod_{2<p\le L^3}\left(1+\frac1{p-2}\right)&\ll\log L,\\
 \sum_{\substack{p\mid\mathcal P_m\\p>L^3}}\frac1{p-2}
 &\ll\frac1{L^3\log L}\sum_{p\mid\mathcal P_m}\log p
 \le\frac{\log\mathcal P_m}{L^3\log L}
 \ll\frac1{L\log L},\\
 \prod_{\substack{p\mid\mathcal P_m\\p>2}}\left(1+\frac1{p-2}\right)
 &\ll(\log L)\exp\left(O\left(\frac1{L\log L}\right)\right)
 \ll\log L.
\end{align*}
Summing over the remaining exponents proves \eqref{eq:uniformtail}.
With $q\mid h$, the period becomes $\epsilon(\operatorname{lcm}(d,q))$.
Splitting at $D$ therefore gives
\begin{equation*}
 \frac1{2C_0L^{i+j}}\sum_{\substack{h\ne0\\q\mid h}}r_{i,j}(h)\mathfrak{S}(h)
 \le\frac1{\epsilon(q)}\sideset{}{^\prime}\sum_{d\le D}
 \frac1{\prod_{p\mid d}(p-2)}
 +\sideset{}{^\prime}\sum_{d>D}\frac1{\epsilon(d)\prod_{p\mid d}(p-2)}
 +O\left(\frac{\log L}{L}\right).
\end{equation*}
Now $q\mid2^{\epsilon(q)}-1$ gives
\begin{equation*}
 \epsilon(q)\ge\log_2(q+1)\longrightarrow\infty.
\end{equation*}
First let $N\to\infty$ and then $D\to\infty$ in the preceding bound,
using \eqref{eq:Romanov}.
\end{proof}

For every integer $m$, uniqueness of binary expansion gives
\begin{equation*}
 \#\{(\nu,\mu)\in\{1,\ldots,L\}^2:2^\nu+2^\mu=m\}\le2.
\end{equation*}
Fixing all but two exponents on one side, we obtain
\begin{equation}\label{eq:zerocount}
 r_{1,1}(0)=L,\qquad r_{i,j}(0)\le2L^{i+j-2}\quad(i+j\ge3).
\end{equation}
For odd $d$, put
\begin{equation*}
 z_d(a)=\frac1{\epsilon(d)}\sum_{t=1}^{\epsilon(d)}e(a2^t/d),\qquad
 P_d(i,j)=\frac1d\sum_{a\bmod d}z_d(a)^i\overline{z_d(a)^j}.
\end{equation*}
For fixed $d$, periodicity and \eqref{eq:zerocount} give
\begin{equation*}
 \frac1{L^{i+j}}\sum_{\substack{h\ne0\\d\mid h}}r_{i,j}(h)
 =P_d(i,j)+O_d(L^{-1}).
\end{equation*}
Hence Lemma~\ref{lem:tail} gives
\begin{equation}\label{eq:Blimit}
 B_{ij}:=\lim_{N\to\infty}\frac1{2L^{i+j}}
 \sum_{h\ne0}r_{i,j}(h)\mathfrak{S}(h)
 =C_0\sideset{}{^\prime}\sum_d
 \frac{P_d(i,j)}{\prod_{p\mid d}(p-2)}<\infty.
\end{equation}
The diagonal averages are those of Khalfalah and Pintz~\cite{KP},
recorded by Johnston and Trudgian~\cite[Section~3.1]{JT}.
For the signed functions we also need the following mixed form.

\begin{proposition}\label{prop:Gram}
For $1\le i,j\le3$, $B_{ij}=1+A_{ij}$, where
\begin{equation}\label{eq:Agram}
 A_{ij}=\sideset{}{^\prime}\sum_{q>1}\frac1{\varphi(q)^2}
 \sum_{\substack{a\bmod q\\(a,q)=1}}z_q(a)^i\overline{z_q(a)^j}.
\end{equation}
The series converge absolutely. The matrix $A=(A_{ij})_{1\le i,j\le3}$
and every tail obtained by restricting $q$ are real symmetric and
positive semidefinite. Moreover, $A_{11}=C_0R_0-1$.
\end{proposition}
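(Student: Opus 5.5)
Starting from the formula \eqref{eq:Blimit}, $B_{ij}=C_0\sum'_d P_d(i,j)/\prod_{p\mid d}(p-2)$, I will decompose each $P_d(i,j)$ according to the exact denominator of $a/d$, and then resum over $d$. Write $a/d=a'/q$ with $q\mid d$, $(a',q)=1$. Then $z_d(a)=z_q(a')$: the sequence $2^t \bmod q$ has period $\epsilon(q)$ dividing $\epsilon(d)$, so the average over $\epsilon(d)$ terms equals the average over one period. Hence
\[
P_d(i,j)=\frac1d\sum_{q\mid d}T_q(i,j),\qquad T_q(i,j)=\sum_{\substack{a\bmod q\\(a,q)=1}}z_q(a)^i\overline{z_q(a)^j},
\]
with $T_1=1$. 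Substituting and interchanging the sums (justified below),
\[
B_{ij}=C_0\sideset{}{^\prime}\sum_q T_q(i,j)\sideset{}{^\prime}\sum_{d:\,q\mid d}\frac1{d\prod_{p\mid d}(p-2)}.
\]
Write $d=qm$ with $m$ odd squarefree and coprime to $q$. The multiplicative factors then split, and the inner sum equals
\[
\prod_{p\mid q}\frac1{p(p-2)}\prod_{p\nmid 2q}\Bigl(1+\frac1{p(p-2)}\Bigr).
\]
Since $1+\frac1{p(p-2)}=\frac{(p-1)^2}{p(p-2)}$ and $C_0=\prod_{p>2}\frac{p(p-2)}{(p-1)^2}$, the $C_0$ cancels the full product. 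What remains is $\prod_{p\mid q}\frac{p(p-2)}{(p-1)^2}\cdot\frac1{p(p-2)}=\frac1{\varphi(q)^2}$. So $B_{ij}=\sum'_q T_q(i,j)/\varphi(q)^2$, and the $q=1$ term is $1$. This gives $B_{ij}=1+A_{ij}$.

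Absolute convergence is the main obstacle, because it also justifies the interchange above. We have $|z_q|\le1$, so $|T_q(i,j)|\le\sum_{(a,q)=1}|z_q(a)|^2=T_q(1,1)$. It therefore suffices to show that $\sum'_q T_q(1,1)/\varphi(q)^2$ converges with nonnegative terms. Note $T_q(1,1)\ge0$, and the same computation applied to absolute values, run backwards with nonnegative terms, gives $\sum'_q T_q(1,1)/\varphi(q)^2=C_0\sum'_d P_d(1,1)/\prod_{p\mid d}(p-2)$. For this last sum, orthogonality gives
\[
P_d(1,1)=\frac1{\epsilon(d)^2}\#\{(s,t):2^s\equiv2^t \ (\mathrm{mod}\ d)\}=\frac1{\epsilon(d)}.
\]
So the sum is $C_0R_0<\infty$ by \eqref{eq:Romanov}. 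This also yields $A_{11}=C_0R_0-1$. Since all terms are nonnegative, Tonelli justifies the rearrangements, and dominated convergence then handles general $i,j$.

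For the matrix structure, fix $q$ and set $x_a=(z_q(a),z_q(a)^2,z_q(a)^3)^T$. The $q$-th summand is then $\varphi(q)^{-2}\sum_a x_a\overline{x_a}^{\,T}$, a Hermitian positive semidefinite matrix. Any sum of these over a subset of $q$ is Hermitian PSD, and the limit of such sums is too. For reality, note that $a\mapsto -a$ permutes the reduced residues and sends $z_q(a)\mapsto\overline{z_q(a)}$. Hence $T_q(i,j)=\overline{T_q(i,j)}$ is real. A real Hermitian matrix is real symmetric, and PSD carries over, since $v^T A v=v^*Av\ge0$ for real $v$.
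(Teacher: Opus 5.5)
Your proof is correct and follows the paper's argument: reduce $a/d$ to lowest terms, use the Euler product identity $C_0\sideset{}{^\prime}\sum_{d:\,q\mid d}\frac{1}{d\prod_{p\mid d}(p-2)}=\varphi(q)^{-2}$, and get the real symmetric positive semidefinite structure from the Gram form together with $z_q(-a)=\overline{z_q(a)}$. The only difference is in the domination step: you bound every $(i,j)$ series by the $(1,1)$ series using $|z_q(a)|\le1$ and $B_{11}=C_0R_0<\infty$, while the paper uses finiteness of each $B_{ii}$ and Cauchy's inequality; your route needs only the convergence of $R_0$.
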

\begin{proof}
Reduction of $a/d$ to lowest terms and the Euler product give
\begin{equation*}
 P_d(i,j)=\frac1d\sum_{q\mid d}\sum_{\substack{a\bmod q\\(a,q)=1}}
 z_q(a)^i\overline{z_q(a)^j},\qquad
 C_0\sideset{}{^\prime}\sum_{\substack{d:q\mid d}}
 \frac1{d\prod_{p\mid d}(p-2)}=\frac1{\varphi(q)^2}.
\end{equation*}
For $i=j$, Tonelli's theorem and \eqref{eq:Blimit} give
\begin{equation*}
 \sideset{}{^\prime}\sum_q\frac1{\varphi(q)^2}
 \sum_{\substack{a\bmod q\\(a,q)=1}}|z_q(a)|^{2i}=B_{ii}<\infty.
\end{equation*}
Cauchy's inequality gives, for all $i,j$,
\begin{equation*}
 \sideset{}{^\prime}\sum_q\frac1{\varphi(q)^2}
 \sum_{\substack{a\bmod q\\(a,q)=1}}|z_q(a)|^{i+j}
 \le\sqrt{B_{ii}B_{jj}}<\infty.
\end{equation*}
Thus the sums in \eqref{eq:Blimit} may be rearranged, and
$B_{ij}=1+A_{ij}$. Since $z_q(-a)=\overline{z_q(a)}$,
\begin{gather*}
 A_{ij}=\overline{A_{ij}}=A_{ji},\\
 v^TAv=\sideset{}{^\prime}\sum_{q>1}\frac1{\varphi(q)^2}
 \sum_{\substack{a\bmod q\\(a,q)=1}}
 \left|\sum_{i=1}^3v_i z_q(a)^i\right|^2\ge0
 \qquad(v\in\mathbb R^3).
\end{gather*}
The same identity applies to every subseries. Finally,
\begin{equation*}
 P_d(1,1)=\frac1{\epsilon(d)},\qquad
 B_{11}=C_0\sideset{}{^\prime}\sum_d
 \frac1{\epsilon(d)\prod_{p\mid d}(p-2)}=C_0R_0.
\end{equation*}
\end{proof}

For each odd squarefree $q>1$, let $A(q)=(A_{ij}(q))_{1\le i,j\le3}$,
where
\begin{equation*}
 A_{ij}(q)=\frac1{\varphi(q)^2}
 \sum_{\substack{a\bmod q\\(a,q)=1}}z_q(a)^i\overline{z_q(a)^j},
 \qquad A=\sum_q A(q).
\end{equation*}
Put
\[
 A_*=A(3)+A(5)+A(7)+A(15)+A(31).
\]
We choose these $5$ moduli because they are precisely the odd integers greater than $1$ for which the multiplicative order of $2$ is at most $5$, allowing their contributions to be computed explicitly.
The cited diagonal bounds and \eqref{eq:Romanovproduct} give
\begin{equation}\label{eq:U}
 A_{11}<0.279,\qquad A_{22}<0.0555,\qquad A_{33}<0.0136.
\end{equation}
Terminating decimals below denote exact rational numbers.

Here is an integer calculation of the finite matrices. Put
$H_q=\{2^t\bmod q:1\le t\le\epsilon(q)\}$ and let $n_j(r)$ count
ordered sums of $j$ elements of $H_q$ congruent to $r\pmod q$.
With residues taken modulo $q$,
\begin{align*}
 n_0(r)&=\boldsymbol1_{r=0},&
 n_{j+1}(r)&=\sum_{h\in H_q}n_j(r-h),\\
 c_q(t)&=\sum_{d\mid(q,t)}d\mu(q/d),&
 A_{ij}(q)&=\frac{\displaystyle\sum_{r,s\bmod q}n_i(r)n_j(s)c_q(r-s)}
 {\varphi(q)^2\epsilon(q)^{i+j}}.
\end{align*}

The five matrices are
\begin{gather*}
 A(3)=\begin{pmatrix}
 \dfrac{1}{8} & -\dfrac{1}{16} & \dfrac{1}{32} \\[8pt]
 -\dfrac{1}{16} & \dfrac{1}{32} & -\dfrac{1}{64} \\[8pt]
 \dfrac{1}{32} & -\dfrac{1}{64} & \dfrac{1}{128}
 \end{pmatrix},\qquad
 A(5)=\begin{pmatrix}
 \dfrac{1}{64} & -\dfrac{1}{256} & \dfrac{1}{1024} \\[8pt]
 -\dfrac{1}{256} & \dfrac{1}{1024} & -\dfrac{1}{4096} \\[8pt]
 \dfrac{1}{1024} & -\dfrac{1}{4096} & \dfrac{1}{16384}
 \end{pmatrix},\\[8pt]
 A(7)=\begin{pmatrix}
 \dfrac{1}{27} & -\dfrac{1}{162} & -\dfrac{1}{162} \\[8pt]
 -\dfrac{1}{162} & \dfrac{2}{243} & -\dfrac{1}{729} \\[8pt]
 -\dfrac{1}{162} & -\dfrac{1}{729} & \dfrac{4}{2187}
 \end{pmatrix},\qquad
 A(15)=\begin{pmatrix}
 \dfrac{1}{32} & \dfrac{1}{256} & -\dfrac{7}{1024} \\[8pt]
 \dfrac{1}{256} & \dfrac{1}{128} & \dfrac{1}{1024} \\[8pt]
 -\dfrac{7}{1024} & \dfrac{1}{1024} & \dfrac{1}{512}
 \end{pmatrix},\\[8pt]
 A(31)=\begin{pmatrix}
 \dfrac{13}{2250} & \dfrac{1}{3750} & -\dfrac{8}{28125} \\[8pt]
 \dfrac{1}{3750} & \dfrac{77}{56250} & \dfrac{22}{140625} \\[8pt]
 -\dfrac{8}{28125} & \dfrac{22}{140625} & \dfrac{499}{1406250}
 \end{pmatrix}.
\end{gather*}

For the choices in Sections~\ref{sec:completion} and \ref{sec:five}, put
\[
 v=\left(-\frac1{16},-\frac14,1\right)^T,\qquad
 w=\left(-\frac18,1,0\right)^T.
\]
The diagonal entries and required quadratic forms are listed in
Table~\ref{tab:small-Gram}.
\begin{table}[htbp]
\centering
\renewcommand{\arraystretch}{1.4}
\caption{Diagonal entries and quadratic forms for
$v=(-1/16,-1/4,1)^T$ and $w=(-1/8,1,0)^T$.}
\begin{tabular}{crrrrr}
\toprule
$q$ & $A_{11}(q)$ & $A_{22}(q)$ & $A_{33}(q)$
& $v^TA(q)v$ & $\begin{gathered}w^TA(q)w\\+4v^TA(q)v\end{gathered}$\\
\midrule
$3$ & $\frac18$ & $\frac1{32}$ & $\frac1{128}$
& $\frac{25}{2048}$ & $\frac{25}{256}$\\
$5$ & $\frac1{64}$ & $\frac1{1024}$ & $\frac1{16384}$
& $\frac1{16384}$ & $\frac5{2048}$\\
$7$ & $\frac1{27}$ & $\frac2{243}$ & $\frac4{2187}$
& $\frac{2101}{559872}$ & $\frac{1775}{69984}$\\
$15$ & $\frac1{32}$ & $\frac1{128}$ & $\frac1{512}$
& $\frac{25}{8192}$ & $\frac5{256}$\\
$31$ & $\frac{13}{2250}$ & $\frac{77}{56250}$ & $\frac{499}{1406250}$
& $\frac{154309}{360000000}$ & $\frac{139817}{45000000}$\\
\bottomrule
\end{tabular}
\label{tab:small-Gram}
\end{table}

Summing the diagonal entries gives
\[
 (A_*)_{11}>0.2146,\qquad
 (A_*)_{22}>0.0496,\qquad
 (A_*)_{33}>0.012.
\]
The tail $T=A-A_*$ is positive semidefinite. By \eqref{eq:U},
\begin{align*}
 0\le T_{11}&<0.279-0.2146=0.0644<0.254^2,\\
 0\le T_{22}&<0.0555-0.0496=0.0059<0.077^2,\\
 0\le T_{33}&<0.0136-0.012=0.04^2,\\
 |T_{ij}|&\le\sqrt{T_{ii}T_{jj}}.
\end{align*}
Therefore, for $v=(v_1,v_2,v_3)^T\in\mathbb R^3$,
\begin{equation}\label{eq:PSDtail}
\begin{aligned}
 v^TAv&\le v^TA_*v+
 \left(\sum_{i=1}^3|v_i|\sqrt{T_{ii}}\right)^2\\
 &\le v^TA_*v+
 \bigl(0.254|v_1|+0.077|v_2|+0.04|v_3|\bigr)^2.
\end{aligned}
\end{equation}

\section{Major and minor arc estimates}\label{sec:arcs}

Fix a sufficiently small error parameter $\varepsilon>0$, and take
$\kappa$ sufficiently small in terms of $\varepsilon$.
The explicit formula of Pintz and Ruzsa~\cite{PRII}, applied with
$P_0=N^{4/9-1/450}$, gives a choice of $P=P(N)$ satisfying
\begin{equation}\label{eq:Prange}
 N^{11/25}\le P\le P_0.
\end{equation}
Write $\|t\|$ for the distance from $t$ to the nearest integer, and put
\begin{equation}\label{eq:arcs}
\begin{aligned}
 \mathfrak M&=\bigcup_{q\le P}\ \bigcup_{\substack{r\bmod q\\(r,q)=1}}
 \left\{\alpha\in(0,1]:\left\|\alpha-\frac rq\right\|\le\frac{P}{qN}\right\},\\
 \mathfrak m&=(0,1]\setminus\mathfrak M.
\end{aligned}
\end{equation}
The arcs are disjoint since $2P^2<N$. Define
\begin{equation}\label{eq:prime-integrals}
R_1(h)=\int_{\mathfrak M}|S(\alpha)|^2e(-h\alpha)\,\mathrm d\alpha,
\quad R(h)=\int_0^1|S(\alpha)|^2e(-h\alpha)\,\mathrm d\alpha.
\end{equation}
$R_1(h)$ and $R(h)$ are real and even, and
\begin{equation}\label{eq:Jtrivial}
0\le R_1(0)\le R(0)=\sum_{N_1<p\le N}(\log p)^2
=(1+o(1))N\log N.
\end{equation}

\begin{lemma}[{\cite[Theorem~2 and Corollary~1]{PRII}}]\label{lem:PR}
For this choice of $P$, there are $O_{\varepsilon}(1)$ integers
$r_\ell=r_\ell(N)$ and a fixed integer $c_{\varepsilon}$ such that
\begin{equation}\label{eq:PRestimate}
 |R_1(h)-N\mathfrak{S}(h)|\le\varepsilon N\mathfrak{S}(h)
\end{equation}
whenever $0<|h|\le3\cdot2^L$, $2\mid h$, and
$r_\ell\nmid c_{\varepsilon}h$ for every $\ell$.
The odd squarefree part of each $r_\ell$ tends to infinity.
For all even $h$ with $0<|h|\le3\cdot2^L$, including the exceptional
ones, one also has $|R_1(h)|\ll_{\varepsilon}N\mathfrak{S}(h)$.
\end{lemma}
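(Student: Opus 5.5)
The plan is to follow the Pintz--Ruzsa major-arc analysis and obtain \eqref{eq:PRestimate} by expanding $|S(\alpha)|^2$ on each arc of $\mathfrak M$ in Dirichlet characters. Since the statement is quoted from \cite[Theorem~2 and Corollary~1]{PRII}, the work consists of checking that our $P$, the range $0<|h|\le3\cdot2^L=o(N)$, and the cutoff $N_1<p\le N$ satisfy its hypotheses.

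\textbf{Step 1: character expansion.} For $\alpha=r/q+\beta$ with $q\le P$ and $|\beta|\le P/(qN)$, write
\[
S(\alpha)=\frac1{\varphi(q)}\sum_{\chi\bmod q}\tau(\bar\chi)\chi(r)\,S(\chi,\beta)+O\bigl((\log N)^2\bigr),
\qquad S(\chi,\beta)=\sum_{N_1<p\le N}\chi(p)(\log p)\,e(p\beta).
\]
\textbf{Step 2: explicit formula.} Apply the truncated explicit formula to each $S(\chi,\beta)$. Then square, integrate against $e(-h\alpha)$ over the arcs, and sum over $r$ and $q$. The contribution of the principal characters with the main term $\sum e(n\beta)$ gives the singular integral, which is $(1+o(1))N$ because $|h|=o(N)$, times the truncated singular series $\sum_{q\le P}\mu(q)^2 c_q(h)/\varphi(q)^2$. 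Its tail beyond $P$ is $o(\mathfrak S(h))$ uniformly for $h\ll N$.

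\textbf{Step 3: zero contributions.} The terms from zeros $\rho$ of $L(s,\chi)$ are controlled by a log-free zero-density estimate together with the Deuring--Heilbronn repulsion phenomenon. With $P\le P_0=N^{4/9-1/450}$, all zeros contribute at most $\varepsilon N\mathfrak S(h)/2$ in total, except those in a region $\sigma\ge1-c(\varepsilon)/\log N$. Only $O_\varepsilon(1)$ such zeros exist, belonging to primitive characters $\chi_\ell$ with conductors $r_\ell\le P$. Pintz--Ruzsa choose $P$ in \eqref{eq:Prange} to avoid boundary effects, so that no near-exceptional character has conductor close to $P$; this also fixes $N^{11/25}\le P$.

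\textbf{Step 4: surviving cross terms.} A cross term involving an exceptional $\chi_\ell$ sums over the moduli $q$ with $r_\ell\mid q$. This sum produces a factor of the form $\sum_{q}\,\mu(q)^2\,c_q(h)\,\tau(\chi_\ell\chi_0)\cdots/\varphi(q)^2$, which factors over primes. Such a factor vanishes, or is at most $\varepsilon\mathfrak S(h)$, unless the odd squarefree part of $r_\ell$ essentially divides $h$; the factor $c_\varepsilon$ absorbs bounded cofactors. This yields \eqref{eq:PRestimate} whenever $r_\ell\nmid c_\varepsilon h$ for every $\ell$.

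\textbf{Step 5: remaining assertions.} For all $h$, including the exceptional ones, bounding each exceptional contribution trivially by $O_\varepsilon(N\mathfrak S(h))$ gives $|R_1(h)|\ll_\varepsilon N\mathfrak S(h)$. For the claim that the odd squarefree part of each $r_\ell$ tends to infinity: Siegel-type bounds rule out exceptional zeros this close to $1$ for characters of bounded conductor. Characters with bounded odd squarefree part but large power-of-two or square part have real zeros that Pintz--Ruzsa exclude or absorb into $c_\varepsilon$.

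\textbf{Main obstacle.} I expect Step 3 to be the hardest. The whole saving depends on the explicit numerical zero-density and repulsion input that permits $P$ as large as $N^{4/9-1/450}$. I would take that input directly from \cite{PRII} rather than re-derive it.
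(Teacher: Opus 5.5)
Your proposal takes essentially the paper's route. The paper gives no argument of its own and simply quotes \cite[Theorem~2 and Corollary~1]{PRII} with $P_0=N^{4/9-1/450}$; your Steps 1--4 are an informal gloss on that result, and they defer the hard zero-density input to \cite{PRII} in the same way.

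The one point to fix is Step 5. Siegel-type bounds only handle bounded conductors, and a fixed integer $c_\varepsilon$ cannot absorb an unbounded power of $2$ (or of another fixed prime) dividing $r_\ell$. This clause matters later: an exceptional conductor $2^k$ with $k\le\log_2P$ would make a positive proportion of the differences $h=2^\nu-2^\mu$ exceptional. So the statement that the odd squarefree part of $r_\ell$ tends to infinity must be taken from \cite{PRII}, as the paper does, or proved from a zero-free region for characters to moduli with bounded radical.
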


\begin{lemma}\label{lem:mixedmajor}
For $1\le i,j\le3$, with the same $P$,
\begin{equation}\label{eq:mixedmajor}
\sum_{h\ne0}r_{i,j}(h)R_1(h)
=2NL^{i+j}\bigl(1+A_{ij}+O(\varepsilon)+o(1)\bigr).
\end{equation}
Consequently, for $Q,F$ in \eqref{eq:parameters} and
$v=(-b,-a,1)^T$,
\begin{equation}\label{eq:majorF}
\int_{\mathfrak M}|S(\alpha)\cdot F(G(\alpha))|^2\,\mathrm d\alpha
=2NL^6\bigl((1-a-b)^2+v^TAv+O(\varepsilon)+o(1)\bigr)
 +b^2L^5R_1(0).
\end{equation}
\end{lemma}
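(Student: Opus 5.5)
We first prove \eqref{eq:mixedmajor} and then obtain \eqref{eq:majorF} by expanding the square. Since $R_1(h)$ is even, the sum we need is $\sum_{h\ne0}r_{i,j}(h)R_1(h)$ with $h$ even and $0<|h|\le3\cdot2^L$. We split the $h$ into two classes. The \emph{good} $h$ are those with $r_\ell\nmid c_\varepsilon h$ for every $\ell$; the \emph{exceptional} $h$ are the rest. For good $h$, Lemma~\ref{lem:PR} gives $R_1(h)=N\mathfrak S(h)(1+O(\varepsilon))$.

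For exceptional $h$, some $r_\ell$ divides $c_\varepsilon h$, so $h$ is divisible by $q_\ell'$, the odd squarefree part of $r_\ell/\gcd(r_\ell,c_\varepsilon)$. Since $c_\varepsilon$ is fixed, $q_\ell'$ still tends to infinity. By the last assertion of Lemma~\ref{lem:PR}, $|R_1(h)|\ll_\varepsilon N\mathfrak S(h)$. Then \eqref{eq:badclass}, summed over the $O_\varepsilon(1)$ values of $\ell$, shows that exceptional $h$ contribute $o(NL^{i+j})$, both to the sum with $R_1$ and to the sum with $N\mathfrak S$. Hence
\[
\sum_{h\ne0}r_{i,j}(h)R_1(h)=N\sum_{h\ne0}r_{i,j}(h)\mathfrak S(h)\bigl(1+O(\varepsilon)\bigr)+o(NL^{i+j}).
\]
Using \eqref{eq:Blimit} together with $B_{ij}<\infty$, and then Proposition~\ref{prop:Gram}, we get $N\cdot2L^{i+j}(B_{ij}+o(1))(1+O(\varepsilon))=2NL^{i+j}(1+A_{ij}+O(\varepsilon)+o(1))$. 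This is \eqref{eq:mixedmajor}. The main subtlety is this bookkeeping of exceptional moduli, in particular checking that the factor $c_\varepsilon$ does not spoil the condition that the odd squarefree part tends to infinity.

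For \eqref{eq:majorF}, write $F(G)=G^3-aLG^2-bL^2G=\sum_{i=1}^3L^{3-i}v_iG^i$ with $v=(-b,-a,1)^T$. Then
\[
|F(G)|^2=\sum_{i,j}v_iv_jL^{6-i-j}G^i\overline{G}^j,\qquad G(\alpha)^i\overline{G(\alpha)}^j=\sum_h r_{i,j}(h)e(h\alpha).
\]
Integrating against $|S|^2$ over $\mathfrak M$ gives
\[
\int_{\mathfrak M}|S\,F(G)|^2\,\mathrm d\alpha=\sum_{i,j}v_iv_jL^{6-i-j}\sum_h r_{i,j}(h)R_1(h),
\]
using that $R_1$ is real and even so the sign convention on $h$ does not matter.

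The $h\ne0$ part, by \eqref{eq:mixedmajor}, equals
\[
2NL^6\Bigl(\sum_{i,j}v_iv_j(1+A_{ij})+O(\varepsilon)+o(1)\Bigr)=2NL^6\bigl((v_1+v_2+v_3)^2+v^TAv+O(\varepsilon)+o(1)\bigr),
\]
and $v_1+v_2+v_3=1-a-b$.

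The $h=0$ part is $R_1(0)\sum_{i,j}v_iv_jL^{6-i-j}r_{i,j}(0)$. By \eqref{eq:zerocount}, $r_{1,1}(0)=L$, which yields exactly $b^2L^5R_1(0)$. Every other pair has $r_{i,j}(0)\le2L^{i+j-2}$, contributing $O(L^4R_1(0))$. By \eqref{eq:Jtrivial} this is $O(L^4N\log N)=O(NL^5)=o(NL^6)$, since $\log N\asymp L$. This completes \eqref{eq:majorF}.
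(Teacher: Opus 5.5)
Your proposal is correct and follows essentially the same route as the paper. The paper also separates the exceptional $h$ through an odd divisor tending to infinity (it uses $q_\ell=\prod_{p\mid r_\ell,\,p\nmid 2c_\varepsilon}p$, which divides your $q_\ell'$), controls those $h$ by \eqref{eq:badclass} and $|R_1(h)|\ll_\varepsilon N\mathfrak S(h)$, applies \eqref{eq:PRestimate} and \eqref{eq:Blimit} to the remaining $h$, and then expands $|F(G)|^2$, treating the $h=0$ terms with \eqref{eq:zerocount} and \eqref{eq:Jtrivial}.
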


\begin{proof}
Put
\begin{gather*}
 \mathcal H=\{h\in2\mathbb Z:0<|h|\le3\cdot2^L,\quad
                  r_\ell\mid c_{\varepsilon}h\text{ for some }\ell\},\\
 q_\ell=\prod_{\substack{p\mid r_\ell\\p\nmid2c_{\varepsilon}}}p
 \longrightarrow\infty,\qquad
 r_\ell\mid c_{\varepsilon}h\ \Longrightarrow\ q_\ell\mid h.
\end{gather*}
Lemma~\ref{lem:tail} gives
\begin{equation*}
 \sum_{h\in\mathcal H}r_{i,j}(h)\mathfrak{S}(h)
 \le\sum_\ell\sum_{\substack{h\ne0\\q_\ell\mid h}}
 r_{i,j}(h)\mathfrak{S}(h)=o(L^{i+j}).
\end{equation*}
Lemma~\ref{lem:PR} and \eqref{eq:Blimit} now give
\begin{align*}
 &\left|\sum_{h\ne0}r_{i,j}(h)R_1(h)
       -N\sum_{h\ne0}r_{i,j}(h)\mathfrak{S}(h)\right|\\
 &\quad\le \varepsilon N\sum_{\substack{h\ne0\\h\notin\mathcal H}}
       r_{i,j}(h)\mathfrak{S}(h)
       +O_{\varepsilon}\left(N\sum_{h\in\mathcal H}
          r_{i,j}(h)\mathfrak{S}(h)\right)\\
 &\quad\le 2\varepsilon NL^{i+j}(1+A_{ij}+o(1))+o(NL^{i+j}),
\end{align*}
which proves \eqref{eq:mixedmajor}.

For $v=(v_1,v_2,v_3)^T=(-b,-a,1)^T$, we have
\begin{align*}
 &\int_{\mathfrak M}|S(\alpha)\cdot F(G(\alpha))|^2\,\mathrm d\alpha\\
 &\quad=\sum_{i,j=1}^3v_iv_jL^{6-i-j}
       \sum_h r_{i,j}(h)R_1(h)\\
 &\quad=2NL^6\left(\left(\sum_{i=1}^3v_i\right)^2
       +v^TAv+O(\varepsilon)+o(1)\right)
       +R_1(0)\sum_{i,j=1}^3v_iv_jL^{6-i-j}r_{i,j}(0).
\end{align*}
Finally, \eqref{eq:zerocount} and \eqref{eq:Jtrivial} give
\begin{align*}
 R_1(0)\sum_{i,j=1}^3v_iv_jL^{6-i-j}r_{i,j}(0)
 &=b^2L^5R_1(0)+O(L^4R_1(0))\\
 &=b^2L^5R_1(0)+o(NL^6).
\end{align*}
Substitute $\sum_i v_i=1-a-b$ to obtain \eqref{eq:majorF}.
\end{proof}

By \cite[Proposition~2]{JT} we have the uniform bound
\begin{equation}\label{eq:unweighted-sieve}
 \sum_{\substack{p_1,p_2\le N\\p_1-p_2=h}}1
 \le\frac{C_1+o(1)}2\frac{N\mathfrak{S}(h)}{(\log N)^2},
 \qquad C_1=6.7814,
\end{equation}
where the sum is over ordered pairs of primes and $h$ is a nonzero
even integer with $|h|<N$. The error is uniform in $h$; negative $h$
are covered by interchanging $p_1$ and $p_2$. Hence
\begin{equation}\label{eq:sieve}
\begin{aligned}
 R(h)&=\sum_{\substack{N_1<p_1,p_2\le N\\p_1-p_2=h}}
       (\log p_1)(\log p_2)\\
 &\le(\log N)^2\sum_{\substack{p_1,p_2\le N\\p_1-p_2=h}}1
 \le\frac{C_1+o(1)}2N\mathfrak{S}(h).
\end{aligned}
\end{equation}
Using $C_0R_0=1+A_{11}$, \eqref{eq:Blimit}, and
$\log N/L=\log2+o(1)$, we obtain
\begin{align}
\int_0^1|S(\alpha)\cdot G(\alpha)|^2\,\mathrm d\alpha
&=LR(0)+\sum_{h\ne0}r_{1,1}(h)R(h)
 \le NL^2\bigl(\log2+C_1C_0R_0+o(1)\bigr),\label{eq:fullbase}\\
\int_{\mathfrak M}|S(\alpha)\cdot G(\alpha)|^2\,\mathrm d\alpha
&=LR_1(0)+2NL^2\bigl(C_0R_0+O(\varepsilon)+o(1)\bigr).\label{eq:majorbase}
\end{align}
Subtracting \eqref{eq:majorbase} from \eqref{eq:fullbase}, we obtain
\begin{equation}\label{eq:minorbase}
\begin{aligned}
\int_{\mathfrak m}|S(\alpha)\cdot G(\alpha)|^2\,\mathrm d\alpha
&=\int_0^1|S(\alpha)\cdot G(\alpha)|^2\,\mathrm d\alpha
 -\int_{\mathfrak M}|S(\alpha)\cdot G(\alpha)|^2\,\mathrm d\alpha\\
&\le2NL^2\left(\frac{C_1-2}{2}C_0R_0+\frac{\log2}{2}
 +O(\varepsilon)+o(1)\right)-LR_1(0).
\end{aligned}
\end{equation}

\begin{lemma}\label{lem:Jlower}
Uniformly for $P$ satisfying \eqref{eq:Prange},
\begin{equation}\label{eq:Jlower}
 R_1(0) = (1+o(1)) N\log P.
\end{equation}
\end{lemma}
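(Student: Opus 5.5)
The plan is to compute $R_1(0)=\int_{\mathfrak M}|S(\alpha)|^2\,\mathrm d\alpha$ arc by arc, comparing $S$ on each arc with its expected main term. Write $\alpha=r/q+\beta$ with $|\beta|\le P/(qN)$ and put $T(\beta)=\sum_{N_1<n\le N}e(n\beta)$. The heuristic main term is $S(r/q+\beta)\approx \frac{\mu(q)}{\varphi(q)}T(\beta)$.

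First I compute the main-term contribution. Since $\int_{|\beta|\le\delta}|T(\beta)|^2\,\mathrm d\beta=(N-N_1)\bigl(1+O(1/(\delta N))\bigr)$ and here $\delta N=P/q$, the arcs give
\begin{equation*}
\sum_{q\le P}\frac{\mu(q)^2}{\varphi(q)^2}\,\varphi(q)\,N\Bigl(1+O\bigl(\tfrac qP\bigr)+o(1)\Bigr).
\end{equation*}
Using $\sum_{q\le P}\mu(q)^2/\varphi(q)=\log P+O(1)$ and $\sum_{q\le P}\mu(q)^2 q/(P\varphi(q))\ll1$, this equals $(1+o(1))N\log P$, uniformly in $P$ in the range \eqref{eq:Prange}.

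Next, the error $E(r/q+\beta)=S(r/q+\beta)-\frac{\mu(q)}{\varphi(q)}T(\beta)$ must be shown to contribute $o(N\log P)$ to the integral. By Cauchy--Schwarz it suffices to show $\sum_{q\le P}\sum_r\int|E|^2=o(N\log N)$. I would expand $E$ in Dirichlet characters modulo $q$ and split $q$ into two ranges.
\begin{itemize}
\item For $q\le(\log N)^A$, Siegel--Walfisz gives $E\ll N(\log N)^{-B}$ pointwise. Since the total arc length for these $q$ is small, this range contributes negligibly.
\item For larger $q$, I would use Gallagher's lemma to convert $\int_{|\beta|\le\delta}|E|^2$ into mean squares of $\psi(x+y,\chi)-\psi(x,\chi)$ (minus the main term) over intervals of length $\asymp qN/P$. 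I would then apply the large sieve with a zero-density estimate. This is exactly the input behind the Pintz--Ruzsa explicit formula used to choose $P$ in \eqref{eq:Prange}. Their choice of $P$ guarantees that, apart from $O_\varepsilon(1)$ exceptional moduli $r_\ell$, the characters contribute $\le\varepsilon$ times the main term.
\end{itemize}
The exceptional characters (zeros very close to $1$) need separate treatment. They occur only for moduli divisible by some $r_\ell$, whose odd squarefree parts tend to infinity. Their total weight is $\sum_{q\le P,\ r_\ell\mid q}1/\varphi(q)\ll(\log P)/\varphi(r_\ell)=o(\log P)$. Since $|S|$ and its main term are both bounded there by the Brun--Titchmarsh inequality, these arcs contribute $o(N\log P)$.

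Letting $\varepsilon\to0$ after $N\to\infty$ then gives \eqref{eq:Jlower}. An alternative, if Pintz--Ruzsa's Theorem~2 is stated for all $|h|\le3\cdot2^L$ including $h=0$ with main term $N\sum_{q\le P}\mu(q)^2/\varphi(q)$, is simply to quote it; the computation above is what such a statement amounts to.

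The main obstacle is the range $(\log N)^A<q\le P\approx N^{4/9}$. Pointwise bounds fail there, and one needs a mean-square bound for the character-sum error that is $o(N\log N)$ in total rather than merely $O(N\log N)$. The large sieve alone gives only $O(N\log N)$, so the saving must come from log-free zero density together with the removal of the exceptional zeros, which is precisely what the choice of $P$ from \cite{PRII} encodes. I would therefore try first to extract the $h=0$ case directly from the Pintz--Ruzsa explicit formula.
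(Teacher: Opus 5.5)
Your principal-character computation is fine. Everything after it serves only the upper bound $R_1(0)\le(1+o(1))N\log P$, and there the argument has genuine gaps, one of which cannot be closed unconditionally.

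(i) For $q\le(\log N)^A$, Siegel--Walfisz bounds $S(r/q+\beta)-\mu(q)\varphi(q)^{-1}T(\beta)$ only by $O\bigl(N(1+|\beta|N)(\log N)^{-B}\bigr)$. This is useless once $|\beta|N$ exceeds a power of $\log N$, whereas the arcs have half-width $P/(qN)$ with $P\ge N^{11/25}$. Even granting $|E|\ll N(\log N)^{-B}$ on the whole arc, these arcs have total measure $\asymp P(\log N)^A/N$. That gives $PN(\log N)^{A-2B}$, not $o(N\log N)$.

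(ii) Lemma~\ref{lem:PR} cannot be quoted at $h=0$. It requires $0<|h|$ and $r_\ell\nmid c_\varepsilon h$, and $0$ is divisible by every $r_\ell$, so $h=0$ is exactly the excluded case.

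(iii) Most seriously, your exceptional-character bound is off by a factor $r_\ell$. Since $p>N_1>P\ge q$,
\[
\sum_{\substack{a\bmod q\\(a,q)=1}}|S(a/q+\beta)|^2
=\frac1{\varphi(q)}\sum_{\chi\bmod q}|\tau(\chi)|^2
\Bigl|\sum_{N_1<p\le N}\chi(p)(\log p)e(p\beta)\Bigr|^2 ,
\]
where $\tau(\chi)$ is the Gauss sum. For squarefree $q=mr_\ell$, the character induced by a primitive $\chi_\ell$ modulo $r_\ell$ has $|\tau(\chi)|^2=r_\ell$. It therefore enters with weight $r_\ell/\varphi(q)$, not $1/\varphi(q)$, and summing over $m\le P/r_\ell$ gives total weight $\asymp\log(P/r_\ell)$, not $o(\log P)$. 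Brun--Titchmarsh gives only $\ll qN/\varphi(q)$ per modulus here, not $\ll N/\varphi(q)$. Suppose $\chi_\ell$ had a Landau--Siegel zero $\beta_\ell$ with $(1-\beta_\ell)\log r_\ell\to0$ and $r_\ell\le P^{1-\delta}$, which cannot be excluded unconditionally. Then the $\chi_\ell$-sum has mean square $\sim N$ on $|\beta|\le T/N$, and this term alone is $\gg N\log P$. So the upper bound would require ruling out such zeros, and your route cannot deliver it.

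What the paper actually proves, and all it later uses, is the lower bound. Its proof ends with $\liminf_{N\to\infty}R_1(0)/(N\log P)\ge1-2/(\pi^2T)$. In Section~\ref{sec:completion}, $R_1(0)$ enters only through $(b^2-H)R_1(0)/(2NL)$ with $b^2-H\le0$. For this direction the identity above already suffices: every term is nonnegative, so keeping only $\chi_0$ gives $\sum_{(a,q)=1}|S(a/q+\beta)|^2\ge\mu(q)^2\varphi(q)^{-1}|S(\beta)|^2$. Equivalently, apply Cauchy--Schwarz with the Ramanujan sum $c_q(p)=\mu(q)$. Now restrict to $q\le P/T$ and $|\beta|\le T/N$, which lies inside every such arc, and use $N^{-1}\int_{|\beta|\le T/N}|S(\beta)|^2\,\mathrm d\beta=1+O(T^{-1})+o_T(1)$, from the prime number theorem and Plancherel. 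This gives the lower bound with no Siegel--Walfisz, no zero-density input and no exceptional moduli. The idea missing from your proposal is this positivity: the error terms can be discarded rather than estimated, and the direction in which they cannot be discarded is never needed.
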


\begin{proof} This lemma is similar to \cite[(4.8)]{PRII}, in which the range of $P$ is smaller.  
Since $p>N_1>P\ge q$, the Ramanujan sum identity and Cauchy's inequality give
\begin{equation*}
 \sum_{\substack{a\bmod q\\(a,q)=1}}|S(a/q+\beta)|^2
 \ge\frac1{\varphi(q)}
 \left|\sum_{\substack{a\bmod q\\(a,q)=1}}S(a/q+\beta)\right|^2
 =\frac{\mu(q)^2}{\varphi(q)}|S(\beta)|^2.
\end{equation*}
For fixed $T\ge1$,
\[
 q\le P/T\ \Longrightarrow\ \frac{T}{N}\le\frac{P}{qN},
 \qquad
 \sum_{q\le P/T}\frac{\mu(q)^2}{\varphi(q)}=\log P+O_T(1).
\]
The disjointness of the arcs gives
\begin{align*}
 R_1(0)&\ge\sum_{q\le P/T}\frac{\mu(q)^2}{\varphi(q)}
                 \int_{-T/N}^{T/N}|S(\beta)|^2\,\mathrm d\beta\\
 &=\bigl(\log P+O_T(1)\bigr)
       \int_{-T/N}^{T/N}|S(\beta)|^2\,\mathrm d\beta.
\end{align*}
By the prime number theorem and partial summation, uniformly for $|u|\le T$,
\[
 \frac{S(u/N)}N=\int_0^1e(ut)\,\mathrm dt+o_T(1).
\]
Together with Plancherel's theorem, this gives
\begin{equation}\label{eq:central-integral}
 \frac1N\int_{-T/N}^{T/N}|S(\beta)|^2\,\mathrm d\beta
 =\int_{-T}^{T}\left|\int_0^1e(ut)\,\mathrm dt\right|^2\,\mathrm du+o_T(1)
 =1+O(T^{-1})+o_T(1).
\end{equation}
Here
\[
 \int_{|u|>T}\left|\int_0^1e(ut)\,\mathrm dt\right|^2\,\mathrm du
 \le\int_{|u|>T}\frac{\mathrm du}{\pi^2u^2}=\frac{2}{\pi^2T}.
\]
Thus
\[
 \liminf_{N\to\infty}\frac{R_1(0)}{N\log P}
 \ge1-\frac{2}{\pi^2T}\qquad(T\ge1).
\]
Letting $T\to\infty$ proves \eqref{eq:Jlower}.
\end{proof}

Under GRH we use the arcs of Pintz and Ruzsa~\cite{PRI}.

\begin{lemma}\label{lem:grhbridge}
Assume GRH and fix $0<\kappa<1/2$. Put
$L_0=\lfloor\log_2N\rfloor$, $P=N^{1/2}L_0^{-8}$, $Q_0=N^{1/2}$, and
\begin{align*}
 \mathfrak M_{\mathrm G}
 &=\bigcup_{q\le P}\ \bigcup_{\substack{a\bmod q\\(a,q)=1}}
 \left\{\alpha\in[0,1]:\left\|\alpha-\frac aq\right\|\le\frac1{qQ_0}\right\},\\
 \mathfrak m_{\mathrm G}&=[0,1]\setminus\mathfrak M_{\mathrm G},\qquad
 R_{1,\mathrm G}(h)=\int_{\mathfrak M_{\mathrm G}}|S(\alpha)|^2e(-h\alpha)\,\mathrm d\alpha.
\end{align*}
For $1\le i,j\le3$,
\begin{align}
 \int_{\mathfrak M_{\mathrm G}}|S(\alpha)|^2G(\alpha)^i
 \overline{G(\alpha)^j}\,\mathrm d\alpha
 &=2NL^{i+j}\bigl(1+A_{ij}+o(1)\bigr)+r_{i,j}(0)R_{1,\mathrm G}(0),
 \label{eq:grhmixedbridge}\\
 R_{1,\mathrm G}(0)&\ge\bigl(\tfrac12-o(1)\bigr)N\log N.
 \label{eq:grhJbridge}
\end{align}
Moreover,
\begin{align}
 \int_{\mathfrak m_{\mathrm G}}|S(\alpha)\cdot G(\alpha)|^2\,\mathrm d\alpha
 &\le2NL^2\left(\frac{C_1-2}{2}C_0R_0+\frac{\log2}{2}+o(1)\right)-LR_{1,\mathrm G}(0)
 \notag\\
 &\le2NL^2\left(\frac{C_1-2}{2}C_0R_0+\frac{\log2}{4}+o(1)\right).
 \label{eq:grhminorbridge}
\end{align}
Let $\mathcal E=\mathcal E(N)\subseteq[0,1]$ be measurable. If
$\int_{\mathcal E}\mathrm d\alpha\ll N^{-\delta+o(1)}$ for a fixed
$\delta>1/2$, then
\begin{equation}\label{eq:grhexceptionalbridge}
 \int_{\mathfrak m_{\mathrm G}\cap\mathcal E}|S(\alpha)|^2\,\mathrm d\alpha=o(N).
\end{equation}
\end{lemma}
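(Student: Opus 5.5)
The plan is to run the unconditional argument again with the Pintz--Ruzsa GRH arcs, proving the four assertions in the order \eqref{eq:grhJbridge}, \eqref{eq:grhmixedbridge}, \eqref{eq:grhminorbridge}, \eqref{eq:grhexceptionalbridge}.

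\emph{Step 1: the lower bound \eqref{eq:grhJbridge}.} Repeat the proof of Lemma~\ref{lem:Jlower} with $\mathfrak M_{\mathrm G}$ in place of $\mathfrak M$. The arcs about $a/q$ have radius $1/(qQ_0)$, and $q\le P<Q_0$, so they are disjoint. The Ramanujan-sum Cauchy step needs $p>N_1>q$, and this holds since $\kappa<1/2$ gives $N_1=N^{1-\kappa}>N^{1/2}>P$. Fix $T$ and restrict to $q\le N^{1/2-\eta}$ for a small fixed $\eta>0$. Then $1/(qQ_0)\ge N^{\eta}/N\ge T/N$, and
\[
\sum_{q\le N^{1/2-\eta}}\frac{\mu(q)^2}{\varphi(q)}=(\tfrac12-\eta+o(1))\log N .
\]
Combining this with \eqref{eq:central-integral} gives $R_{1,\mathrm G}(0)\ge(\frac12-\eta-O(T^{-1})-o(1))N\log N$. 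Letting $\eta\to0$ and $T\to\infty$ gives \eqref{eq:grhJbridge}.

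\emph{Step 2: the mixed major-arc asymptotic \eqref{eq:grhmixedbridge}.} Expanding $|G|^2$-type products gives
\[
\int_{\mathfrak M_{\mathrm G}}|S|^2G^i\overline{G^j}\,\mathrm d\alpha=\sum_h r_{i,j}(h)R_{1,\mathrm G}(h).
\]
The term $h=0$ is exactly $r_{i,j}(0)R_{1,\mathrm G}(0)$. For $h\ne0$, I will invoke the GRH major-arc asymptotic of Pintz and Ruzsa~\cite{PRI}. On these arcs, GRH gives $S(a/q+\beta)=\frac{\mu(q)}{\varphi(q)}T(\beta)+O(N^{1/2+o(1)})$. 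This yields $R_{1,\mathrm G}(h)=N\mathfrak S_P(h)+o(N)$ uniformly for $0<|h|\le3\cdot2^L$, where $\mathfrak S_P$ is the truncated singular series. Because the error terms are uniform, there are no exceptional moduli here. The contribution of $\mathfrak S(h)-\mathfrak S_P(h)$ must be shown negligible after averaging against $r_{i,j}(h)$, and Lemma~\ref{lem:tail} with $D\to\infty$ handles this exactly as in the proof of \eqref{eq:Blimit}. Proposition~\ref{prop:Gram} then identifies the limit as $2NL^{i+j}(1+A_{ij})$.

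\emph{Step 3: the minor-arc bound \eqref{eq:grhminorbridge}.} Subtract the case $i=j=1$ of \eqref{eq:grhmixedbridge} from the full-interval bound \eqref{eq:fullbase}. This uses $r_{1,1}(0)=L$ and $1+A_{11}=C_0R_0$, exactly as in \eqref{eq:minorbase}. The second inequality follows from \eqref{eq:grhJbridge} and $L=(1+o(1))\log N/\log2$, since these give $LR_{1,\mathrm G}(0)\ge(\frac{\log2}{2}-o(1))NL^2$.

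\emph{Step 4: the exceptional set \eqref{eq:grhexceptionalbridge}, the main obstacle.} Here I will use the GRH minor-arc bound for $S$ on $\mathfrak m_{\mathrm G}$. By Dirichlet approximation with denominators up to $Q_0$, any $\alpha\in\mathfrak m_{\mathrm G}$ has $q\in(P,Q_0]$. In this range the GRH estimate (as in \cite{PRI}, via the explicit formula or Vaughan's identity) gives $|S(\alpha)|\ll N^{3/4+o(1)}$, or more sharply
\[
|S(\alpha)|\ll N^{1+o(1)}\bigl(q^{-1/2}+(q/N)^{1/2}+N^{-1/4}\bigr)\ll N^{3/4+o(1)}.
\]
Then
\[
\int_{\mathfrak m_{\mathrm G}\cap\mathcal E}|S|^2\,\mathrm d\alpha\ll N^{3/2+o(1)}N^{-\delta}=o(N)
\]
precisely because $\delta>1/2$. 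The delicate point is confirming that this sup bound holds on all of $\mathfrak m_{\mathrm G}$ up to $N^{o(1)}$ factors, including the powers of $L_0$ in $P$; I will use the version stated in \cite{PRI} directly.
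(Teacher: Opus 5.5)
Your Steps~1 and~3 are sound. In Step~1, truncating at $q\le N^{1/2-\eta}$ works just as well as the paper's observation that $N/(PQ_0)=L_0^8\to\infty$ for every $q\le P$. Disjointness actually uses $2P<Q_0$, which holds. The genuine gap is Step~2, the core of the lemma.

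First, the pointwise approximation you state is not available on $\mathfrak M_{\mathrm G}$. Already at $\beta=0$, GRH bounds the error in your approximation only by $q^{1/2}N^{1/2}\log^2N$. For $q\asymp P=N^{1/2}L_0^{-8}$ this is about $N^{3/4}$ up to logarithms. For $\beta\ne0$, partial summation costs a further factor $1+N|\beta|$, which on these wide arcs can be as large as $N^{1/2}/q$. The arcs with $q\asymp P$ have total measure $\asymp P/Q_0=L_0^{-8}$, so the squared error integrates to roughly $N^{3/2}L_0^{-O(1)}$, far larger than $N$. Hence $R_{1,\mathrm G}(h)=N\mathfrak S(h)+o(N)$ cannot be reached pointwise; it has to be quoted in integrated form. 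This is what \cite[Lemma~1]{PRI} supplies, with the full $\mathfrak S(h)$ and error $O(NL_0^{-3})$, so your truncated series $\mathfrak S_P$ and the appeal to Lemma~\ref{lem:tail} are unnecessary.

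Second, and this is the missing idea, that lemma is for the unweighted sum over all primes up to $N$ (the paper's $S_0$, up to the term $p=2$). It does not cover $S$, whose weights are $\log p$ and whose primes exceed $N_1$. Put $W=(\log N)S_0$. Then $\|S-W\|_2^2\ll N/\log N$ but $\|W\|_2^2\asymp N\log N$, so a shift-by-shift transfer only gives $\bigl|\int_{\mathfrak M_{\mathrm G}}(|S|^2-|W|^2)e(-h\alpha)\,\mathrm d\alpha\bigr|\ll N$. That is the same order as the main term $N\mathfrak S(h)$.

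The paper avoids this by transferring after summing over $h$. It bounds $\int_{\mathfrak M_{\mathrm G}}(|S|^2-|W|^2)G^i\overline{G^j}\,\mathrm d\alpha$ by Cauchy's inequality, using $\|(S-W)G^i\|_2^2\le L^{2i}\|S-W\|_2^2\ll NL^{2i}/\log N$ and $\|WG^j\|_2^2\ll NL^{2j}$. The latter bound needs the sieve estimate \eqref{eq:unweighted-sieve} together with \eqref{eq:zerocount} and \eqref{eq:Blimit}, and the outcome is $O(NL^{i+j}/\sqrt{\log N})$. The term $h=0$ is handled separately by $\bigl|R_{1,\mathrm G}(0)-\int_{\mathfrak M_{\mathrm G}}|W|^2\,\mathrm d\alpha\bigr|\ll N$ and $r_{i,j}(0)\ll L^{i+j-1}$. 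Without this step, or an independent proof of a weighted analogue of \cite[Lemma~1]{PRI}, Step~2 is unsupported.

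The same mismatch occurs, harmlessly, in Step~4, since \cite[Lemma~2]{PRI} bounds $S_0$ rather than $S$. There it is fixed at once by $|S|^2\le2|W|^2+2|S-W|^2$, which gives $\int_{\mathfrak m_{\mathrm G}\cap\mathcal E}|S|^2\,\mathrm d\alpha\ll N^{3/2}L_0^6\int_{\mathcal E}\mathrm d\alpha+N/\log N=o(N)$.
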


\begin{proof}
The arcs are disjoint since $2P<Q_0$. Put
\begin{equation*}
 S_0(\alpha)=\sum_{2<p\le N}e(p\alpha),\qquad
 W(\alpha)=(\log N)\cdot S_0(\alpha).
\end{equation*}
Write
$\|V\|_2^2=\int_0^1|V(\alpha)|^2\,\mathrm d\alpha$, and let
$\pi(x)$ count the primes not exceeding $x$.
The sum in \cite{PRI} also includes $2$. For any measurable $E\subseteq[0,1]$,
\[
 \left|\int_E\bigl(|S_0(\alpha)+e(2\alpha)|^2-|S_0(\alpha)|^2\bigr)
 e(-h\alpha)\,\mathrm d\alpha\right|
 \le2\|S_0\|_2+1\ll N^{1/2}.
\]
Thus deleting $2$ is absorbed by the error below.
Pintz and Ruzsa~\cite[Lemma~1, pp.~170--171]{PRI} proved that,
uniformly for nonzero even $h$,
\begin{equation*}
 \int_{\mathfrak M_{\mathrm G}}|S_0(\alpha)|^2e(-h\alpha)\,\mathrm d\alpha
 =\frac{\mathfrak{S}(h)\max(N-|h|,0)}{(\log N)^2}+O(NL_0^{-3}).
\end{equation*}
Using \eqref{eq:Blimit} and
\[
 r_{i,j}(h)\ne0\ \Longrightarrow\ |h|\le3\cdot2^L=o(N/L),
 \qquad \sum_h r_{i,j}(h)=L^{i+j},
\]
we obtain
\begin{align*}
 &\int_{\mathfrak M_{\mathrm G}}|W(\alpha)|^2
 G(\alpha)^i\overline{G(\alpha)^j}\,\mathrm d\alpha
 -r_{i,j}(0)\int_{\mathfrak M_{\mathrm G}}|W(\alpha)|^2\,\mathrm d\alpha\\
 &\quad=\sum_{h\ne0}r_{i,j}(h)
       \bigl((N-|h|)\mathfrak{S}(h)+O(N/L)\bigr)\\
 &\quad=N\sum_{h\ne0}r_{i,j}(h)\mathfrak{S}(h)
       +O\left(2^L\sum_{h\ne0}r_{i,j}(h)\mathfrak{S}(h)
                    +NL^{i+j-1}\right)\\
 &\quad=2NL^{i+j}\bigl(1+A_{ij}+o(1)\bigr),
\end{align*}
as $N\to\infty$.

Parseval and partial summation give
\begin{equation}\label{eq:weighttransfererror}
 \|S-W\|_2^2
 \le\sum_{N_1<p\le N}\bigl(\log(N/p)\bigr)^2
       +\pi(N_1)(\log N)^2
 \ll\frac N{\log N}+N_1\log N
 \ll\frac N{\log N}.
\end{equation}
For $1\le i\le3$, \eqref{eq:unweighted-sieve} gives
\begin{align*}
 \|WG^i\|_2^2
 &=(\log N)^2\sum_h r_{i,i}(h)
       \sum_{\substack{2<p_1,p_2\le N\\p_1-p_2=h}}1\\
 &\le(\log N)^2(\pi(N)-1)r_{i,i}(0)
       +\frac{C_1+o(1)}2N\sum_{h\ne0}r_{i,i}(h)\mathfrak{S}(h)\\
 &\ll NL\cdot r_{i,i}(0)+NL^{2i}\ll NL^{2i},
\end{align*}
where the last line uses \eqref{eq:zerocount} and \eqref{eq:Blimit}.
Also, $|G(\alpha)|\le L$ and \eqref{eq:weighttransfererror} imply
\begin{equation*}
 \|(S-W)G^i\|_2^2\le L^{2i}\|S-W\|_2^2
 \ll\frac{NL^{2i}}{\log N}.
\end{equation*}
For every measurable $\mathcal A\subseteq[0,1]$, the identity
\begin{equation*}
 |S(\alpha)|^2-|W(\alpha)|^2
 =(S(\alpha)-W(\alpha))\overline{W(\alpha)}
  +W(\alpha)\overline{S(\alpha)-W(\alpha)}
  +|S(\alpha)-W(\alpha)|^2
\end{equation*}
and Cauchy's inequality give
\begin{align*}
 &\left|\int_{\mathcal A}(|S(\alpha)|^2-|W(\alpha)|^2)
 G(\alpha)^i\overline{G(\alpha)^j}\,\mathrm d\alpha\right|\\
 &\quad\le\|(S-W)G^i\|_2\|WG^j\|_2
       +\|WG^i\|_2\|(S-W)G^j\|_2\\
 &\qquad\quad+\|(S-W)G^i\|_2\|(S-W)G^j\|_2\\
 &\quad\ll\frac{NL^{i+j}}{\sqrt{\log N}}=o(NL^{i+j}).
\end{align*}
For the zero difference, $\|W\|_2^2=(\log N)^2(\pi(N)-1)\ll N\log N$,
so
\begin{align*}
 \left|R_{1,\mathrm G}(0)
       -\int_{\mathfrak M_{\mathrm G}}|W(\alpha)|^2\,\mathrm d\alpha\right|
 &\le2\|S-W\|_2\|W\|_2+\|S-W\|_2^2\ll N,\\
 N\cdot r_{i,j}(0)&\ll NL^{i+j-1}=o(NL^{i+j}).
\end{align*}
These estimates prove \eqref{eq:grhmixedbridge}.

For fixed $T\ge1$, all the arcs contain $|\beta|\le T/N$ once $N$
is large, since
\begin{equation*}
 \frac{N}{qQ_0}\ge\frac{N}{PQ_0}=L_0^8\ge T,
 \qquad N_1>P.
\end{equation*}
The Ramanujan sum estimate in Lemma~\ref{lem:Jlower} gives
\begin{align*}
 R_{1,\mathrm G}(0)&\ge
 \left(\sum_{q\le P}\frac{\mu(q)^2}{\varphi(q)}\right)
 \int_{-T/N}^{T/N}|S(\beta)|^2\,\mathrm d\beta\\
 &=N(\log P+O(1))\bigl(1+O(T^{-1})+o_T(1)\bigr).
\end{align*}
Since
\[
 \frac{\log P}{\log N}=\frac12-\frac{8\log L_0}{\log N}
 =\frac12+o(1),
\]
we have $\liminf_{N\to\infty}R_{1,\mathrm G}(0)/(N\log N)
\ge\tfrac12-O(T^{-1})$. Letting $T\to\infty$ proves
\eqref{eq:grhJbridge}.
For the minor arcs, \eqref{eq:fullbase} and the case $i=j=1$ of
\eqref{eq:grhmixedbridge} give
\begin{align*}
 \int_{\mathfrak m_{\mathrm G}}|S(\alpha)\cdot G(\alpha)|^2\,\mathrm d\alpha
 &=\int_0^1|S(\alpha)\cdot G(\alpha)|^2\,\mathrm d\alpha
   -\int_{\mathfrak M_{\mathrm G}}|S(\alpha)\cdot G(\alpha)|^2\,\mathrm d\alpha\\
 &\le NL^2\bigl(\log2+(C_1-2)C_0R_0\bigr)
      -LR_{1,\mathrm G}(0)+o(NL^2).
\end{align*}
By \eqref{eq:grhJbridge},
\[
 \frac{R_{1,\mathrm G}(0)}{NL}\ge\frac{\log2}{2}+o(1),
\]
which gives \eqref{eq:grhminorbridge}.

Finally, Pintz and Ruzsa~\cite[Lemma~2, p.~171]{PRI} give
\begin{equation*}
 \sup_{\alpha\in\mathfrak m_{\mathrm G}}|S_0(\alpha)|
 \ll N^{3/4}L_0^2,\qquad
 \sup_{\alpha\in\mathfrak m_{\mathrm G}}|W(\alpha)|^2
 \ll N^{3/2}L_0^6.
\end{equation*}
Thus, for the set $\mathcal E$ in the statement,
\begin{align*}
 \int_{\mathfrak m_{\mathrm G}\cap\mathcal E}|S(\alpha)|^2\,\mathrm d\alpha
 &\le2\left(\int_{\mathcal E}\mathrm d\beta\right)
       \sup_{\alpha\in\mathfrak m_{\mathrm G}}|W(\alpha)|^2
       +2\|S-W\|_2^2\\
 &\ll N^{3/2}L_0^6\int_{\mathcal E}\mathrm d\alpha+N/\log N\\
 &\ll N^{3/2-\delta+o(1)}+N/\log N=o(N),
\end{align*}
since $\delta>1/2$.
\end{proof}

\section{Large deviations of the dyadic sum}\label{sec:large}

Recall that
\begin{equation*}
 z_L(\alpha)=\frac1L\sum_{\nu=1}^L e(2^\nu\alpha).
\end{equation*}
We estimate its real and imaginary parts together using an
exponential moment bound.

Pintz and Ruzsa~\cite[Theorem~2 and Sections~3--7]{PRI} studied
exponential moments of the dyadic sum and obtained explicit bounds
for the set on which its modulus is large.
Aistleitner, Gantert, Kabluchko, Prochno and
Ramanan~\cite[Theorem~B and Proposition~3.4]{AGKPR} proved a
large-deviation principle for geometric cosine sums.
Fr\"uhwirth, Juhos and Prochno~\cite[Theorem~B]{FJP} treated sums
of real periodic functions along integer geometric sequences.

\subsection{An exponential identity}

\begin{lemma}\label{lem:exponential-identity}
Let $w$ be a real continuous $1$-periodic function such that
$w(t+1/2)=-w(t)$. For every positive integer $L$,
\begin{equation}\label{eq:exponential-identity}
 \int_0^1\exp\left(\sum_{\nu=1}^L
 [w(2^\nu\alpha)-\log\cosh w(2^\nu\alpha)]\right)
 \,\mathrm d\alpha=1,
\end{equation}
where $\cosh t=(e^t+e^{-t})/2$.
\end{lemma}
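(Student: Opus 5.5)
The identity will follow from a martingale argument: the doubling map has the same structure as a sequence of independent coin flips, and each factor in the product has conditional expectation one. Put $u(t)=w(t)-\log\cosh w(t)$, so that
\[
 e^{u(t)}=\frac{e^{w(t)}}{\cosh w(t)}=1+\tanh w(t).
\]
The hypothesis $w(t+1/2)=-w(t)$ and the oddness of $\tanh$ give
\[
 e^{u(t)}+e^{u(t+1/2)}=2\qquad\text{for all } t .
\]
Define
\[
 \Pi_L(\alpha)=\prod_{\nu=1}^L e^{u(2^\nu\alpha)}.
\]
This is continuous and positive, so the integral is well defined. We prove $\int_0^1\Pi_L=1$ by induction on $L$; the case $L=0$ (empty product) is trivial.

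For the inductive step, peel off the innermost factor $\nu=1$ rather than the outermost one. Write
\[
 \Pi_L(\alpha)=e^{u(2\alpha)}\,\Pi_{L-1}(2\alpha),
\]
so $\Pi_L(\alpha)=h(2\alpha)$ for the $1$-periodic function $h(t)=e^{u(t)}\Pi_{L-1}(t)$. That does not directly reduce the length, so instead peel off the outermost factor $\nu=L$. Split $[0,1)$ by the substitution $\alpha\mapsto\alpha+1/2^L$, which is the shift that flips the $L$-th dyadic digit. Under $\alpha\mapsto\alpha+2^{-L}$ we have:
\begin{itemize}
\item $2^\nu\alpha$ changes by $2^{\nu-L}$, an integer only when $\nu=L$, so this shift is not clean.
\end{itemize}
So use the shift $\alpha\mapsto\alpha+1/2$ on the first factor instead. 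Under this shift, $2\alpha$ shifts by $1$, so every factor with $\nu\ge1$ is unchanged. That is also wrong.

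The correct bookkeeping uses the substitution $\beta=2^{L}\alpha$ in reverse. By $1$-periodicity, $\int_0^1 F(2\alpha)\,d\alpha=\int_0^1F(\alpha)\,d\alpha$. Hence
\[
 \int_0^1\Pi_L(\alpha)\,d\alpha=\int_0^1 e^{u(\alpha)}\,\Pi_{L-1}(\alpha)\,d\alpha .
\]
Iterating this rewrites the integral as $\int_0^1\prod_{\nu=0}^{L-1}e^{u(2^\nu\alpha)}\,d\alpha$, so without loss of generality the exponents run over $\nu=0,\dots,L-1$. Now peel off the outermost factor $\nu=L-1$: the lower factors $\nu\le L-2$ are $2^{L-1}$-invariant… this again fails. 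The clean reduction is to peel off $\nu=0$ via the pairing $\alpha$, $\alpha+1/2$. The factors with $\nu\ge1$ are invariant under this shift, because $2^\nu\cdot\tfrac12\in\mathbb Z$. Therefore
\begin{align*}
 \int_0^1\prod_{\nu=0}^{L-1}e^{u(2^\nu\alpha)}\,d\alpha
 &=\int_0^{1/2}\bigl(e^{u(\alpha)}+e^{u(\alpha+1/2)}\bigr)
 \prod_{\nu=1}^{L-1}e^{u(2^\nu\alpha)}\,d\alpha\\
 &=2\int_0^{1/2}\prod_{\nu=1}^{L-1}e^{u(2^\nu\alpha)}\,d\alpha .
\end{align*}
The remaining integrand is $1/2$-periodic, so $2\int_0^{1/2}$ of it equals $\int_0^1$ of it, which is $\int_0^1\Pi_{L-1}$. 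By the induction hypothesis this equals $1$.

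The only real step is the pairing identity $e^{u(t)}+e^{u(t+1/2)}=2$, which is immediate from $\tanh(-x)=-\tanh x$. Everything else is the periodicity bookkeeping: shift the index range to start at $\nu=0$, then pair $\alpha$ with $\alpha+1/2$. No convergence issues arise, since all functions involved are continuous and bounded.
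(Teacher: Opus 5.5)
Your final argument is correct and is essentially the paper's proof: it uses the pairing identity $e^{u(t)}+e^{u(t+1/2)}=2$, the rescaling $t=2\alpha$ that moves the product to $\nu=0,\dots,L-1$, and the pairing of $\alpha$ with $\alpha+1/2$ that removes the $\nu=0$ factor, after which $1/2$-periodicity returns $\int_0^1\Pi_{L-1}$; the paper runs the same induction on $I_\ell=\int_0^1\prod_{\nu=0}^{\ell-1}h(2^\nu\alpha)\,\mathrm d\alpha$ instead. In a written version, delete the abandoned attempts (the shift by $2^{-L}$ and the ``peel off $\nu=L-1$'' digression) and the martingale framing, since only the last chain of equalities is needed.
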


\begin{proof}
Put
\begin{equation*}
 h(t)=\frac{\exp(w(t))}{\cosh w(t)},\qquad
 I_\ell=\int_0^1\prod_{\nu=0}^{\ell-1}h(2^\nu\alpha)
 \,\mathrm d\alpha,\qquad I_0=1.
\end{equation*}
Then $h(t)+h(t+1/2)=2$. By periodicity and the changes of variable
$\alpha\mapsto\alpha+1/2$ and $t=2\alpha$, for $\ell\ge1$,
\begin{align*}
 I_\ell
 &=\frac12\int_0^1\bigl(h(\alpha)+h(\alpha+1/2)\bigr)
       \prod_{\nu=1}^{\ell-1}h(2^\nu\alpha)\,\mathrm d\alpha\\
 &=\int_0^1\prod_{\nu=1}^{\ell-1}h(2^\nu\alpha)\,\mathrm d\alpha
 =\frac12\int_0^2\prod_{\nu=0}^{\ell-2}h(2^\nu t)\,\mathrm dt
 =I_{\ell-1}.
\end{align*}
Thus $I_L=1$, and another change $t=2\alpha$ gives
\begin{equation*}
 \int_0^1\prod_{\nu=1}^Lh(2^\nu\alpha)\,\mathrm d\alpha
 =\frac12\int_0^2\prod_{\nu=0}^{L-1}h(2^\nu t)\,\mathrm dt
 =I_L=1.
\end{equation*}
\end{proof}

\subsection{Second moments and the region}

For fixed $\xi,\eta\in\mathbb R$, put
\begin{equation*}
 \begin{aligned}
 w_{\xi,\eta}(t)&=\xi\Re e(t)+\eta\Im e(t),\\
 V_{\xi,\eta}(x,y)
 &=\frac{(1+x)\xi^2+2y\xi\eta+(1-x)\eta^2}{2}.
 \end{aligned}
\end{equation*}
For $z=x+iy$ with $|z|\le1$,
\begin{equation*}
 2V_{\xi,\eta}(x,y)
 =\xi^2+\eta^2+x(\xi^2-\eta^2)+2y\xi\eta
 \ge(1-|z|)(\xi^2+\eta^2)\ge0.
\end{equation*}
Since
\begin{equation*}
 w_{\xi,\eta}(t)^2
 =\frac{\xi^2+\eta^2}{2}
  +\frac12\Re\bigl((\xi-i\eta)^2e(2t)\bigr),
\end{equation*}
writing $z_L(\alpha)=x+iy$ gives
\begin{equation}\label{eq:directional-second-moment}
 \begin{aligned}
 &\frac1L\sum_{\nu=1}^L w_{\xi,\eta}(2^\nu\alpha)^2
       -V_{\xi,\eta}(x,y)\\
 &\qquad=\frac1{2L}\Re\left((\xi-i\eta)^2
       \bigl(e(2^{L+1}\alpha)-e(2\alpha)\bigr)\right).
 \end{aligned}
\end{equation}
Consequently,
\begin{equation*}
 \left|\frac1L\sum_{\nu=1}^Lw_{\xi,\eta}(2^\nu\alpha)^2
       -V_{\xi,\eta}(x,y)\right|
 \le\frac{\xi^2+\eta^2}{L}.
\end{equation*}

For $t>0$, differentiation gives
\begin{equation*}
 \tanh t-\frac{t}{\cosh^2t}
 =\int_0^t\frac{2v\tanh v}{\cosh^2v}\,\mathrm dv\ge0,
 \qquad
 \frac{\mathrm d}{\mathrm dt}\frac{\tanh t}{t}\le0.
\end{equation*}
It follows, by integrating the decreasing derivative
$\tanh\sqrt u/(2\sqrt u)$, that for $u,v\ge0$,
\begin{equation*}
 \log\cosh\sqrt u
 \le\log\cosh\sqrt v+
       \frac{\tanh\sqrt v}{2\sqrt v}(u-v),\qquad
 0\le\frac{\tanh\sqrt v}{2\sqrt v}\le\frac12,
\end{equation*}
where the quotient at $v=0$ is $1/2$.
Taking $v=V_{\xi,\eta}(x,y)$ and summing with
$u=w_{\xi,\eta}(2^\nu\alpha)^2$, we obtain
\begin{equation*}
 \sum_{\nu=1}^L\log\cosh w_{\xi,\eta}(2^\nu\alpha)
 \le L\log\cosh\sqrt{V_{\xi,\eta}(x,y)}
       +\frac{\xi^2+\eta^2}{2}.
\end{equation*}
For $|z|\le1$, define
\begin{equation}\label{eq:directional-cost}
 \Phi_{\xi,\eta}(z)=\xi x+\eta y
                 -\log\cosh\sqrt{V_{\xi,\eta}(x,y)}.
\end{equation}
Since $L(\xi x+\eta y)=\sum_{\nu=1}^Lw_{\xi,\eta}(2^\nu\alpha)$,
\begin{equation*}
 L\Phi_{\xi,\eta}(z_L(\alpha))
 \le\sum_{\nu=1}^L
 [w_{\xi,\eta}(2^\nu\alpha)-\log\cosh w_{\xi,\eta}(2^\nu\alpha)]
 +\frac{\xi^2+\eta^2}{2}.
\end{equation*}
Lemma~\ref{lem:exponential-identity} therefore gives
\begin{equation}\label{eq:entropy-moment}
 \int_0^1\exp\bigl(L\Phi_{\xi,\eta}(z_L(\alpha))\bigr)
 \,\mathrm d\alpha
 \le\exp\left(\frac{\xi^2+\eta^2}{2}\right).
\end{equation}

Let $s>0$, $z=x+iy$ and $0<u=x^2+y^2<1$. Choose
\begin{equation}\label{eq:chosen-coefficients}
 \xi=\frac{\sqrt2\,s(x-u)}{\sqrt{u(1-x)(1-u)}},\qquad
 \eta=\frac{\sqrt2\,s y}{\sqrt{u(1-x)(1-u)}}.
\end{equation}
The identities
\begin{align*}
 (1+x)(x-u)^2+2y^2(x-u)+(1-x)y^2
 &=u(1-x)(1-u),\\
 x(x-u)+y^2&=u(1-x)
\end{align*}
give
\begin{equation}\label{eq:directional-value}
 V_{\xi,\eta}(x,y)=s^2,\qquad
 \Phi_{\xi,\eta}(z)
 =s\sqrt{\frac{2u(1-x)}{1-u}}-\log\cosh s.
\end{equation}

For $0<\lambda<1$, take $s=\frac12\log\frac{1+\lambda}{1-\lambda}$.
Then $\cosh s=(1-\lambda^2)^{-1/2}$ and
\begin{equation}\label{eq:entropy}
 \begin{aligned}
 I(\lambda)&:=\frac{1+\lambda}{2}\log(1+\lambda)
             +\frac{1-\lambda}{2}\log(1-\lambda)\\
 &=s\lambda-\log\cosh s.
 \end{aligned}
\end{equation}
We use $I(\lambda)$ as an exponent supplied by
\eqref{eq:entropy-moment}, without asserting that it is the best
possible exponent for the dyadic sum.

Put
\begin{equation}\label{eq:region-parameters}
 r_\lambda=\frac{\lambda^2}{2},\qquad
 \beta_\lambda=
 \frac{r_\lambda+\sqrt{r_\lambda^2+4r_\lambda}}2.
\end{equation}
Thus
\begin{equation*}
 \beta_\lambda^2=r_\lambda(1+\beta_\lambda),\qquad
 0<r_\lambda<\frac12,\qquad 0<\beta_\lambda<1.
\end{equation*}
Define
\begin{equation}\label{eq:region}
 \mathcal D_\lambda=
 \left\{x+iy:x\le\beta_\lambda,\quad
 x^2+y^2\le\frac{r_\lambda}{1+r_\lambda-x}\right\}.
\end{equation}
For $x\le\beta_\lambda$,
\begin{equation*}
 1+r_\lambda-x\ge1+r_\lambda-\beta_\lambda
 =\frac1{1+\beta_\lambda}>0,\qquad
 \frac{r_\lambda}{1+r_\lambda-x}\le\beta_\lambda^2<1.
\end{equation*}
Hence $\mathcal D_\lambda$ is closed and bounded, and
$|z|\le\beta_\lambda$ on this region. Conversely, for
$v=\sqrt{x^2+y^2}<1$,
\begin{align*}
 (1+r_\lambda-x)(x^2+y^2)\le r_\lambda
 &\Longrightarrow
 v^2(1-v)\le v^2(1-x)\le r_\lambda(1-v^2)\\
 &\Longrightarrow v^2\le r_\lambda(1+v)
 \Longrightarrow v\le\beta_\lambda.
\end{align*}
It follows that
\begin{equation}\label{eq:region-test}
 \begin{aligned}
 |z|<1:\quad
 z\in\mathcal D_\lambda
 &\Longleftrightarrow (1+r_\lambda-x)|z|^2\le r_\lambda,\\
 |z|=1:\quad
 (1+r_\lambda-x)|z|^2-r_\lambda&=1-x\ge0.
 \end{aligned}
\end{equation}
Figure~\ref{fig:entropy-regions}
shows the two regions used below.

\begin{figure}[htbp]
\centering
\begin{tikzpicture}[x=2.9cm,y=2.9cm,font=\small]
\fill[blue!8] plot coordinates {
(0.787910,0.000000) (0.787000,0.020608) (0.784310,0.041104) (0.779952,0.061384) (0.774086,0.081360)
(0.766895,0.100964) (0.758563,0.120145) (0.749262,0.138868) (0.739141,0.157109) (0.728328,0.174856)
(0.716929,0.192101) (0.705032,0.208840) (0.692709,0.225075) (0.680020,0.240808) (0.667013,0.256042)
(0.653730,0.270784) (0.640205,0.285037) (0.626467,0.298809) (0.612541,0.312105) (0.598449,0.324931)
(0.584209,0.337293) (0.569839,0.349198) (0.555352,0.360650) (0.540762,0.371655) (0.526081,0.382220)
(0.511320,0.392349) (0.496489,0.402049) (0.481598,0.411323) (0.466655,0.420178) (0.451669,0.428618)
(0.436647,0.436647) (0.421598,0.444272) (0.406528,0.451495) (0.391444,0.458322) (0.376353,0.464757)
(0.361261,0.470804) (0.346174,0.476468) (0.331100,0.481753) (0.316042,0.486663) (0.301008,0.491201)
(0.286003,0.495373) (0.271033,0.499181) (0.256103,0.502630) (0.241218,0.505724) (0.226384,0.508467)
(0.211606,0.510862) (0.196889,0.512913) (0.182238,0.514624) (0.167658,0.516000) (0.153155,0.517043)
(0.138733,0.517757) (0.124396,0.518147) (0.110150,0.518216) (0.096000,0.517968) (0.081949,0.517406)
(0.068003,0.516535) (0.054166,0.515359) (0.040443,0.513880) (0.026838,0.512104) (0.013356,0.510034)
(0.000000,0.507673) (-0.013225,0.505026) (-0.026314,0.502098) (-0.039264,0.498890) (-0.052070,0.495409)
(-0.064728,0.491657) (-0.077235,0.487640) (-0.089585,0.483360) (-0.101777,0.478822) (-0.113805,0.474030)
(-0.125665,0.468989) (-0.137355,0.463702) (-0.148870,0.458174) (-0.160207,0.452410) (-0.171362,0.446412)
(-0.182331,0.440187) (-0.193112,0.433737) (-0.203701,0.427068) (-0.214094,0.420184) (-0.224289,0.413089)
(-0.234282,0.405788) (-0.244070,0.398286) (-0.253650,0.390586) (-0.263019,0.382694) (-0.272174,0.374615)
(-0.281112,0.366352) (-0.289831,0.357911) (-0.298327,0.349296) (-0.306599,0.340512) (-0.314643,0.331565)
(-0.322457,0.322457) (-0.330039,0.313196) (-0.337386,0.303784) (-0.344497,0.294228) (-0.351368,0.284532)
(-0.357998,0.274701) (-0.364384,0.264741) (-0.370525,0.254655) (-0.376419,0.244449) (-0.382064,0.234129)
(-0.387458,0.223699) (-0.392600,0.213164) (-0.397487,0.202530) (-0.402119,0.191801) (-0.406494,0.180983)
(-0.410611,0.170081) (-0.414468,0.159099) (-0.418064,0.148044) (-0.421398,0.136921) (-0.424470,0.125734)
(-0.427277,0.114489) (-0.429820,0.103191) (-0.432097,0.091845) (-0.434108,0.080457) (-0.435852,0.069032)
(-0.437329,0.057575) (-0.438537,0.046092) (-0.439478,0.034588) (-0.440150,0.023067) (-0.440553,0.011536)
(-0.440688,0.000000) (-0.440553,-0.011536) (-0.440150,-0.023067) (-0.439478,-0.034588) (-0.438537,-0.046092)
(-0.437329,-0.057575) (-0.435852,-0.069032) (-0.434108,-0.080457) (-0.432097,-0.091845) (-0.429820,-0.103191)
(-0.427277,-0.114489) (-0.424470,-0.125734) (-0.421398,-0.136921) (-0.418064,-0.148044) (-0.414468,-0.159099)
(-0.410611,-0.170081) (-0.406494,-0.180983) (-0.402119,-0.191801) (-0.397487,-0.202530) (-0.392600,-0.213164)
(-0.387458,-0.223699) (-0.382064,-0.234129) (-0.376419,-0.244449) (-0.370525,-0.254655) (-0.364384,-0.264741)
(-0.357998,-0.274701) (-0.351368,-0.284532) (-0.344497,-0.294228) (-0.337386,-0.303784) (-0.330039,-0.313196)
(-0.322457,-0.322457) (-0.314643,-0.331565) (-0.306599,-0.340512) (-0.298327,-0.349296) (-0.289831,-0.357911)
(-0.281112,-0.366352) (-0.272174,-0.374615) (-0.263019,-0.382694) (-0.253650,-0.390586) (-0.244070,-0.398286)
(-0.234282,-0.405788) (-0.224289,-0.413089) (-0.214094,-0.420184) (-0.203701,-0.427068) (-0.193112,-0.433737)
(-0.182331,-0.440187) (-0.171362,-0.446412) (-0.160207,-0.452410) (-0.148870,-0.458174) (-0.137355,-0.463702)
(-0.125665,-0.468989) (-0.113805,-0.474030) (-0.101777,-0.478822) (-0.089585,-0.483360) (-0.077235,-0.487640)
(-0.064728,-0.491657) (-0.052070,-0.495409) (-0.039264,-0.498890) (-0.026314,-0.502098) (-0.013225,-0.505026)
(-0.000000,-0.507673) (0.013356,-0.510034) (0.026838,-0.512104) (0.040443,-0.513880) (0.054166,-0.515359)
(0.068003,-0.516535) (0.081949,-0.517406) (0.096000,-0.517968) (0.110150,-0.518216) (0.124396,-0.518147)
(0.138733,-0.517757) (0.153155,-0.517043) (0.167658,-0.516000) (0.182238,-0.514624) (0.196889,-0.512913)
(0.211606,-0.510862) (0.226384,-0.508467) (0.241218,-0.505724) (0.256103,-0.502630) (0.271033,-0.499181)
(0.286003,-0.495373) (0.301008,-0.491201) (0.316042,-0.486663) (0.331100,-0.481753) (0.346174,-0.476468)
(0.361261,-0.470804) (0.376353,-0.464757) (0.391444,-0.458322) (0.406528,-0.451495) (0.421598,-0.444272)
(0.436647,-0.436647) (0.451669,-0.428618) (0.466655,-0.420178) (0.481598,-0.411323) (0.496489,-0.402049)
(0.511320,-0.392349) (0.526081,-0.382220) (0.540762,-0.371655) (0.555352,-0.360650) (0.569839,-0.349198)
(0.584209,-0.337293) (0.598449,-0.324931) (0.612541,-0.312105) (0.626467,-0.298809) (0.640205,-0.285037)
(0.653730,-0.270784) (0.667013,-0.256042) (0.680020,-0.240808) (0.692709,-0.225075) (0.705032,-0.208840)
(0.716929,-0.192101) (0.728328,-0.174856) (0.739141,-0.157109) (0.749262,-0.138868) (0.758563,-0.120145)
(0.766895,-0.100964) (0.774086,-0.081360) (0.779952,-0.061384) (0.784310,-0.041104) (0.787000,-0.020608)
(0.787910,-0.000000)
} -- cycle;
\fill[blue!22] plot coordinates {
(0.725701,0.000000) (0.725037,0.018986) (0.723062,0.037894) (0.719821,0.056651) (0.715387,0.075190)
(0.709849,0.093453) (0.703304,0.111392) (0.695853,0.128969) (0.687594,0.146153) (0.678620,0.162922)
(0.669013,0.179262) (0.658849,0.195160) (0.648193,0.210611) (0.637103,0.225610) (0.625630,0.240157)
(0.613816,0.254251) (0.601701,0.267895) (0.589318,0.281090) (0.576696,0.293842) (0.563861,0.306152)
(0.550836,0.318025) (0.537640,0.329466) (0.524292,0.340479) (0.510808,0.351069) (0.497204,0.361240)
(0.483492,0.370997) (0.469685,0.380344) (0.455795,0.389285) (0.441831,0.397827) (0.427805,0.405972)
(0.413724,0.413724) (0.399599,0.421090) (0.385437,0.428071) (0.371247,0.434674) (0.357035,0.440901)
(0.342809,0.446757) (0.328576,0.452246) (0.314342,0.457371) (0.300115,0.462136) (0.285900,0.466547)
(0.271704,0.470605) (0.257532,0.474315) (0.243390,0.477681) (0.229285,0.480706) (0.215221,0.483394)
(0.201204,0.485748) (0.187239,0.487773) (0.173331,0.489472) (0.159487,0.490849) (0.145709,0.491907)
(0.132005,0.492650) (0.118378,0.493081) (0.104834,0.493204) (0.091377,0.493024) (0.078011,0.492542)
(0.064742,0.491764) (0.051574,0.490693) (0.038511,0.489332) (0.025558,0.487685) (0.012720,0.485756)
(0.000000,0.483549) (-0.012597,0.481067) (-0.025067,0.478315) (-0.037407,0.475296) (-0.049611,0.472013)
(-0.061675,0.468471) (-0.073597,0.464674) (-0.085372,0.460626) (-0.096996,0.456330) (-0.108465,0.451790)
(-0.119776,0.447011) (-0.130926,0.441997) (-0.141909,0.436751) (-0.152724,0.431278) (-0.163366,0.425582)
(-0.173832,0.419667) (-0.184119,0.413537) (-0.194223,0.407197) (-0.204142,0.400650) (-0.213871,0.393902)
(-0.223409,0.386955) (-0.232751,0.379816) (-0.241896,0.372487) (-0.250840,0.364974) (-0.259580,0.357281)
(-0.268113,0.349412) (-0.276437,0.341372) (-0.284550,0.333165) (-0.292448,0.324796) (-0.300129,0.316270)
(-0.307591,0.307591) (-0.314832,0.298764) (-0.321848,0.289794) (-0.328639,0.280684) (-0.335201,0.271441)
(-0.341533,0.262068) (-0.347633,0.252570) (-0.353499,0.242953) (-0.359129,0.233221) (-0.364521,0.223379)
(-0.369673,0.213431) (-0.374585,0.203383) (-0.379253,0.193239) (-0.383678,0.183005) (-0.387858,0.172685)
(-0.391790,0.162285) (-0.395475,0.151809) (-0.398911,0.141262) (-0.402096,0.130649) (-0.405031,0.119976)
(-0.407713,0.109246) (-0.410142,0.098466) (-0.412318,0.087641) (-0.414239,0.076775) (-0.415905,0.065873)
(-0.417316,0.054941) (-0.418471,0.043983) (-0.419370,0.033005) (-0.420012,0.022012) (-0.420397,0.011008)
(-0.420525,0.000000) (-0.420397,-0.011008) (-0.420012,-0.022012) (-0.419370,-0.033005) (-0.418471,-0.043983)
(-0.417316,-0.054941) (-0.415905,-0.065873) (-0.414239,-0.076775) (-0.412318,-0.087641) (-0.410142,-0.098466)
(-0.407713,-0.109246) (-0.405031,-0.119976) (-0.402096,-0.130649) (-0.398911,-0.141262) (-0.395475,-0.151809)
(-0.391790,-0.162285) (-0.387858,-0.172685) (-0.383678,-0.183005) (-0.379253,-0.193239) (-0.374585,-0.203383)
(-0.369673,-0.213431) (-0.364521,-0.223379) (-0.359129,-0.233221) (-0.353499,-0.242953) (-0.347633,-0.252570)
(-0.341533,-0.262068) (-0.335201,-0.271441) (-0.328639,-0.280684) (-0.321848,-0.289794) (-0.314832,-0.298764)
(-0.307591,-0.307591) (-0.300129,-0.316270) (-0.292448,-0.324796) (-0.284550,-0.333165) (-0.276437,-0.341372)
(-0.268113,-0.349412) (-0.259580,-0.357281) (-0.250840,-0.364974) (-0.241896,-0.372487) (-0.232751,-0.379816)
(-0.223409,-0.386955) (-0.213871,-0.393902) (-0.204142,-0.400650) (-0.194223,-0.407197) (-0.184119,-0.413537)
(-0.173832,-0.419667) (-0.163366,-0.425582) (-0.152724,-0.431278) (-0.141909,-0.436751) (-0.130926,-0.441997)
(-0.119776,-0.447011) (-0.108465,-0.451790) (-0.096996,-0.456330) (-0.085372,-0.460626) (-0.073597,-0.464674)
(-0.061675,-0.468471) (-0.049611,-0.472013) (-0.037407,-0.475296) (-0.025067,-0.478315) (-0.012597,-0.481067)
(-0.000000,-0.483549) (0.012720,-0.485756) (0.025558,-0.487685) (0.038511,-0.489332) (0.051574,-0.490693)
(0.064742,-0.491764) (0.078011,-0.492542) (0.091377,-0.493024) (0.104834,-0.493204) (0.118378,-0.493081)
(0.132005,-0.492650) (0.145709,-0.491907) (0.159487,-0.490849) (0.173331,-0.489472) (0.187239,-0.487773)
(0.201204,-0.485748) (0.215221,-0.483394) (0.229285,-0.480706) (0.243390,-0.477681) (0.257532,-0.474315)
(0.271704,-0.470605) (0.285900,-0.466547) (0.300115,-0.462136) (0.314342,-0.457371) (0.328576,-0.452246)
(0.342809,-0.446757) (0.357035,-0.440901) (0.371247,-0.434674) (0.385437,-0.428071) (0.399599,-0.421090)
(0.413724,-0.413724) (0.427805,-0.405972) (0.441831,-0.397827) (0.455795,-0.389285) (0.469685,-0.380344)
(0.483492,-0.370997) (0.497204,-0.361240) (0.510808,-0.351069) (0.524292,-0.340479) (0.537640,-0.329466)
(0.550836,-0.318025) (0.563861,-0.306152) (0.576696,-0.293842) (0.589318,-0.281090) (0.601701,-0.267895)
(0.613816,-0.254251) (0.625630,-0.240157) (0.637103,-0.225610) (0.648193,-0.210611) (0.658849,-0.195160)
(0.669013,-0.179262) (0.678620,-0.162922) (0.687594,-0.146153) (0.695853,-0.128969) (0.703304,-0.111392)
(0.709849,-0.093453) (0.715387,-0.075190) (0.719821,-0.056651) (0.723062,-0.037894) (0.725037,-0.018986)
(0.725701,-0.000000)
} -- cycle;
\draw[gray,densely dashed] (0,0) circle (1);
\draw[blue!65!black,thick] plot coordinates {
(0.787910,0.000000) (0.787000,0.020608) (0.784310,0.041104) (0.779952,0.061384) (0.774086,0.081360)
(0.766895,0.100964) (0.758563,0.120145) (0.749262,0.138868) (0.739141,0.157109) (0.728328,0.174856)
(0.716929,0.192101) (0.705032,0.208840) (0.692709,0.225075) (0.680020,0.240808) (0.667013,0.256042)
(0.653730,0.270784) (0.640205,0.285037) (0.626467,0.298809) (0.612541,0.312105) (0.598449,0.324931)
(0.584209,0.337293) (0.569839,0.349198) (0.555352,0.360650) (0.540762,0.371655) (0.526081,0.382220)
(0.511320,0.392349) (0.496489,0.402049) (0.481598,0.411323) (0.466655,0.420178) (0.451669,0.428618)
(0.436647,0.436647) (0.421598,0.444272) (0.406528,0.451495) (0.391444,0.458322) (0.376353,0.464757)
(0.361261,0.470804) (0.346174,0.476468) (0.331100,0.481753) (0.316042,0.486663) (0.301008,0.491201)
(0.286003,0.495373) (0.271033,0.499181) (0.256103,0.502630) (0.241218,0.505724) (0.226384,0.508467)
(0.211606,0.510862) (0.196889,0.512913) (0.182238,0.514624) (0.167658,0.516000) (0.153155,0.517043)
(0.138733,0.517757) (0.124396,0.518147) (0.110150,0.518216) (0.096000,0.517968) (0.081949,0.517406)
(0.068003,0.516535) (0.054166,0.515359) (0.040443,0.513880) (0.026838,0.512104) (0.013356,0.510034)
(0.000000,0.507673) (-0.013225,0.505026) (-0.026314,0.502098) (-0.039264,0.498890) (-0.052070,0.495409)
(-0.064728,0.491657) (-0.077235,0.487640) (-0.089585,0.483360) (-0.101777,0.478822) (-0.113805,0.474030)
(-0.125665,0.468989) (-0.137355,0.463702) (-0.148870,0.458174) (-0.160207,0.452410) (-0.171362,0.446412)
(-0.182331,0.440187) (-0.193112,0.433737) (-0.203701,0.427068) (-0.214094,0.420184) (-0.224289,0.413089)
(-0.234282,0.405788) (-0.244070,0.398286) (-0.253650,0.390586) (-0.263019,0.382694) (-0.272174,0.374615)
(-0.281112,0.366352) (-0.289831,0.357911) (-0.298327,0.349296) (-0.306599,0.340512) (-0.314643,0.331565)
(-0.322457,0.322457) (-0.330039,0.313196) (-0.337386,0.303784) (-0.344497,0.294228) (-0.351368,0.284532)
(-0.357998,0.274701) (-0.364384,0.264741) (-0.370525,0.254655) (-0.376419,0.244449) (-0.382064,0.234129)
(-0.387458,0.223699) (-0.392600,0.213164) (-0.397487,0.202530) (-0.402119,0.191801) (-0.406494,0.180983)
(-0.410611,0.170081) (-0.414468,0.159099) (-0.418064,0.148044) (-0.421398,0.136921) (-0.424470,0.125734)
(-0.427277,0.114489) (-0.429820,0.103191) (-0.432097,0.091845) (-0.434108,0.080457) (-0.435852,0.069032)
(-0.437329,0.057575) (-0.438537,0.046092) (-0.439478,0.034588) (-0.440150,0.023067) (-0.440553,0.011536)
(-0.440688,0.000000) (-0.440553,-0.011536) (-0.440150,-0.023067) (-0.439478,-0.034588) (-0.438537,-0.046092)
(-0.437329,-0.057575) (-0.435852,-0.069032) (-0.434108,-0.080457) (-0.432097,-0.091845) (-0.429820,-0.103191)
(-0.427277,-0.114489) (-0.424470,-0.125734) (-0.421398,-0.136921) (-0.418064,-0.148044) (-0.414468,-0.159099)
(-0.410611,-0.170081) (-0.406494,-0.180983) (-0.402119,-0.191801) (-0.397487,-0.202530) (-0.392600,-0.213164)
(-0.387458,-0.223699) (-0.382064,-0.234129) (-0.376419,-0.244449) (-0.370525,-0.254655) (-0.364384,-0.264741)
(-0.357998,-0.274701) (-0.351368,-0.284532) (-0.344497,-0.294228) (-0.337386,-0.303784) (-0.330039,-0.313196)
(-0.322457,-0.322457) (-0.314643,-0.331565) (-0.306599,-0.340512) (-0.298327,-0.349296) (-0.289831,-0.357911)
(-0.281112,-0.366352) (-0.272174,-0.374615) (-0.263019,-0.382694) (-0.253650,-0.390586) (-0.244070,-0.398286)
(-0.234282,-0.405788) (-0.224289,-0.413089) (-0.214094,-0.420184) (-0.203701,-0.427068) (-0.193112,-0.433737)
(-0.182331,-0.440187) (-0.171362,-0.446412) (-0.160207,-0.452410) (-0.148870,-0.458174) (-0.137355,-0.463702)
(-0.125665,-0.468989) (-0.113805,-0.474030) (-0.101777,-0.478822) (-0.089585,-0.483360) (-0.077235,-0.487640)
(-0.064728,-0.491657) (-0.052070,-0.495409) (-0.039264,-0.498890) (-0.026314,-0.502098) (-0.013225,-0.505026)
(-0.000000,-0.507673) (0.013356,-0.510034) (0.026838,-0.512104) (0.040443,-0.513880) (0.054166,-0.515359)
(0.068003,-0.516535) (0.081949,-0.517406) (0.096000,-0.517968) (0.110150,-0.518216) (0.124396,-0.518147)
(0.138733,-0.517757) (0.153155,-0.517043) (0.167658,-0.516000) (0.182238,-0.514624) (0.196889,-0.512913)
(0.211606,-0.510862) (0.226384,-0.508467) (0.241218,-0.505724) (0.256103,-0.502630) (0.271033,-0.499181)
(0.286003,-0.495373) (0.301008,-0.491201) (0.316042,-0.486663) (0.331100,-0.481753) (0.346174,-0.476468)
(0.361261,-0.470804) (0.376353,-0.464757) (0.391444,-0.458322) (0.406528,-0.451495) (0.421598,-0.444272)
(0.436647,-0.436647) (0.451669,-0.428618) (0.466655,-0.420178) (0.481598,-0.411323) (0.496489,-0.402049)
(0.511320,-0.392349) (0.526081,-0.382220) (0.540762,-0.371655) (0.555352,-0.360650) (0.569839,-0.349198)
(0.584209,-0.337293) (0.598449,-0.324931) (0.612541,-0.312105) (0.626467,-0.298809) (0.640205,-0.285037)
(0.653730,-0.270784) (0.667013,-0.256042) (0.680020,-0.240808) (0.692709,-0.225075) (0.705032,-0.208840)
(0.716929,-0.192101) (0.728328,-0.174856) (0.739141,-0.157109) (0.749262,-0.138868) (0.758563,-0.120145)
(0.766895,-0.100964) (0.774086,-0.081360) (0.779952,-0.061384) (0.784310,-0.041104) (0.787000,-0.020608)
(0.787910,-0.000000)
} -- cycle;
\draw[blue!65!black,thick,densely dashed] plot coordinates {
(0.725701,0.000000) (0.725037,0.018986) (0.723062,0.037894) (0.719821,0.056651) (0.715387,0.075190)
(0.709849,0.093453) (0.703304,0.111392) (0.695853,0.128969) (0.687594,0.146153) (0.678620,0.162922)
(0.669013,0.179262) (0.658849,0.195160) (0.648193,0.210611) (0.637103,0.225610) (0.625630,0.240157)
(0.613816,0.254251) (0.601701,0.267895) (0.589318,0.281090) (0.576696,0.293842) (0.563861,0.306152)
(0.550836,0.318025) (0.537640,0.329466) (0.524292,0.340479) (0.510808,0.351069) (0.497204,0.361240)
(0.483492,0.370997) (0.469685,0.380344) (0.455795,0.389285) (0.441831,0.397827) (0.427805,0.405972)
(0.413724,0.413724) (0.399599,0.421090) (0.385437,0.428071) (0.371247,0.434674) (0.357035,0.440901)
(0.342809,0.446757) (0.328576,0.452246) (0.314342,0.457371) (0.300115,0.462136) (0.285900,0.466547)
(0.271704,0.470605) (0.257532,0.474315) (0.243390,0.477681) (0.229285,0.480706) (0.215221,0.483394)
(0.201204,0.485748) (0.187239,0.487773) (0.173331,0.489472) (0.159487,0.490849) (0.145709,0.491907)
(0.132005,0.492650) (0.118378,0.493081) (0.104834,0.493204) (0.091377,0.493024) (0.078011,0.492542)
(0.064742,0.491764) (0.051574,0.490693) (0.038511,0.489332) (0.025558,0.487685) (0.012720,0.485756)
(0.000000,0.483549) (-0.012597,0.481067) (-0.025067,0.478315) (-0.037407,0.475296) (-0.049611,0.472013)
(-0.061675,0.468471) (-0.073597,0.464674) (-0.085372,0.460626) (-0.096996,0.456330) (-0.108465,0.451790)
(-0.119776,0.447011) (-0.130926,0.441997) (-0.141909,0.436751) (-0.152724,0.431278) (-0.163366,0.425582)
(-0.173832,0.419667) (-0.184119,0.413537) (-0.194223,0.407197) (-0.204142,0.400650) (-0.213871,0.393902)
(-0.223409,0.386955) (-0.232751,0.379816) (-0.241896,0.372487) (-0.250840,0.364974) (-0.259580,0.357281)
(-0.268113,0.349412) (-0.276437,0.341372) (-0.284550,0.333165) (-0.292448,0.324796) (-0.300129,0.316270)
(-0.307591,0.307591) (-0.314832,0.298764) (-0.321848,0.289794) (-0.328639,0.280684) (-0.335201,0.271441)
(-0.341533,0.262068) (-0.347633,0.252570) (-0.353499,0.242953) (-0.359129,0.233221) (-0.364521,0.223379)
(-0.369673,0.213431) (-0.374585,0.203383) (-0.379253,0.193239) (-0.383678,0.183005) (-0.387858,0.172685)
(-0.391790,0.162285) (-0.395475,0.151809) (-0.398911,0.141262) (-0.402096,0.130649) (-0.405031,0.119976)
(-0.407713,0.109246) (-0.410142,0.098466) (-0.412318,0.087641) (-0.414239,0.076775) (-0.415905,0.065873)
(-0.417316,0.054941) (-0.418471,0.043983) (-0.419370,0.033005) (-0.420012,0.022012) (-0.420397,0.011008)
(-0.420525,0.000000) (-0.420397,-0.011008) (-0.420012,-0.022012) (-0.419370,-0.033005) (-0.418471,-0.043983)
(-0.417316,-0.054941) (-0.415905,-0.065873) (-0.414239,-0.076775) (-0.412318,-0.087641) (-0.410142,-0.098466)
(-0.407713,-0.109246) (-0.405031,-0.119976) (-0.402096,-0.130649) (-0.398911,-0.141262) (-0.395475,-0.151809)
(-0.391790,-0.162285) (-0.387858,-0.172685) (-0.383678,-0.183005) (-0.379253,-0.193239) (-0.374585,-0.203383)
(-0.369673,-0.213431) (-0.364521,-0.223379) (-0.359129,-0.233221) (-0.353499,-0.242953) (-0.347633,-0.252570)
(-0.341533,-0.262068) (-0.335201,-0.271441) (-0.328639,-0.280684) (-0.321848,-0.289794) (-0.314832,-0.298764)
(-0.307591,-0.307591) (-0.300129,-0.316270) (-0.292448,-0.324796) (-0.284550,-0.333165) (-0.276437,-0.341372)
(-0.268113,-0.349412) (-0.259580,-0.357281) (-0.250840,-0.364974) (-0.241896,-0.372487) (-0.232751,-0.379816)
(-0.223409,-0.386955) (-0.213871,-0.393902) (-0.204142,-0.400650) (-0.194223,-0.407197) (-0.184119,-0.413537)
(-0.173832,-0.419667) (-0.163366,-0.425582) (-0.152724,-0.431278) (-0.141909,-0.436751) (-0.130926,-0.441997)
(-0.119776,-0.447011) (-0.108465,-0.451790) (-0.096996,-0.456330) (-0.085372,-0.460626) (-0.073597,-0.464674)
(-0.061675,-0.468471) (-0.049611,-0.472013) (-0.037407,-0.475296) (-0.025067,-0.478315) (-0.012597,-0.481067)
(-0.000000,-0.483549) (0.012720,-0.485756) (0.025558,-0.487685) (0.038511,-0.489332) (0.051574,-0.490693)
(0.064742,-0.491764) (0.078011,-0.492542) (0.091377,-0.493024) (0.104834,-0.493204) (0.118378,-0.493081)
(0.132005,-0.492650) (0.145709,-0.491907) (0.159487,-0.490849) (0.173331,-0.489472) (0.187239,-0.487773)
(0.201204,-0.485748) (0.215221,-0.483394) (0.229285,-0.480706) (0.243390,-0.477681) (0.257532,-0.474315)
(0.271704,-0.470605) (0.285900,-0.466547) (0.300115,-0.462136) (0.314342,-0.457371) (0.328576,-0.452246)
(0.342809,-0.446757) (0.357035,-0.440901) (0.371247,-0.434674) (0.385437,-0.428071) (0.399599,-0.421090)
(0.413724,-0.413724) (0.427805,-0.405972) (0.441831,-0.397827) (0.455795,-0.389285) (0.469685,-0.380344)
(0.483492,-0.370997) (0.497204,-0.361240) (0.510808,-0.351069) (0.524292,-0.340479) (0.537640,-0.329466)
(0.550836,-0.318025) (0.563861,-0.306152) (0.576696,-0.293842) (0.589318,-0.281090) (0.601701,-0.267895)
(0.613816,-0.254251) (0.625630,-0.240157) (0.637103,-0.225610) (0.648193,-0.210611) (0.658849,-0.195160)
(0.669013,-0.179262) (0.678620,-0.162922) (0.687594,-0.146153) (0.695853,-0.128969) (0.703304,-0.111392)
(0.709849,-0.093453) (0.715387,-0.075190) (0.719821,-0.056651) (0.723062,-0.037894) (0.725037,-0.018986)
(0.725701,-0.000000)
} -- cycle;
\draw[->] (-1.13,0)--(1.15,0) node[right] {$\Re z$};
\draw[->] (0,-1.1)--(0,1.13) node[above] {$\Im z$};
\foreach \x in {-1,1}{\draw (\x,.018)--(\x,-.018) node[below] {$\x$};}
\foreach \y in {-1,1}{\draw (.018,\y)--(-.018,\y) node[left] {$\y$};}
\node[below left] at (0,0) {$0$};
\draw[blue!65!black] (.436647,.436647)--(.77,.78)--(1.02,.78)
 node[right] {$\mathcal D_{5/6}$};
\draw[blue!65!black] (.413724,-.413724)--(.77,-.77)--(1.02,-.77)
 node[right] {$\mathcal D_{25/32}$};
\end{tikzpicture}
\caption{The regions $\mathcal D_{25/32}\subset\mathcal D_{5/6}$ used
in the five- and six-power proofs. Their boundaries satisfy
$(1+\lambda^2/2-x)(x^2+y^2)=\lambda^2/2$.
The outer dashed circle is $|z|=1$.}
\label{fig:entropy-regions}
\end{figure}

\subsection{The exceptional set}

\begin{proposition}\label{prop:large}
Fix $0<\lambda<1$ and $0<\rho<I(\lambda)$. For every positive
integer $L$, put
\begin{equation*}
 \mathcal E_\lambda(L)=
 \{\alpha\in[0,1]:z_L(\alpha)\notin\mathcal D_\lambda\}.
\end{equation*}
Then
\begin{equation}\label{eq:exceptional-measures}
 \int_{\mathcal E_\lambda(L)}\mathrm d\alpha
 \ll_{\lambda,\rho}e^{-\rho L}.
\end{equation}
\end{proposition}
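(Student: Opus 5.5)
The plan is to combine the exponential moment bound \eqref{eq:entropy-moment} with a Chebyshev (Markov) inequality and a compactness argument. Put
\[
 K=\{z:|z|\le1\}\setminus\operatorname{int}\mathcal D_\lambda,
\]
which is compact. Since $|z_L(\alpha)|\le1$, every $\alpha\in\mathcal E_\lambda(L)$ has $z_L(\alpha)\in K$. For fixed real $(\xi,\eta)$ the function $\Phi_{\xi,\eta}$ is continuous on the closed unit disc, because $V_{\xi,\eta}\ge0$ there. Fix $\rho'$ with $\rho<\rho'<I(\lambda)$.

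\emph{Step 1 (a good direction at each point of $K$).} I will show that every $z_0\in K$ admits some $(\xi,\eta)$ with $\Phi_{\xi,\eta}(z_0)>\rho'$. Two cases arise.
\begin{itemize}
\item If $|z_0|<1$, then $u=|z_0|^2>0$, since $0\in\operatorname{int}\mathcal D_\lambda$. Take the coefficients \eqref{eq:chosen-coefficients} with $s=\frac12\log\frac{1+\lambda}{1-\lambda}$. By \eqref{eq:directional-value},
\[
 \Phi_{\xi,\eta}(z_0)=s\sqrt{2u(1-x)/(1-u)}-\log\cosh s .
\]
Because $z_0\notin\operatorname{int}\mathcal D_\lambda$, the test \eqref{eq:region-test} gives $(1+r_\lambda-x)u\ge r_\lambda$. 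This is equivalent to $u(1-x)\ge r_\lambda(1-u)$, that is, to $2u(1-x)/(1-u)\ge\lambda^2$. Hence $\Phi_{\xi,\eta}(z_0)\ge s\lambda-\log\cosh s=I(\lambda)>\rho'$ by \eqref{eq:entropy}.
\item If $|z_0|=1$, these coefficients degenerate, so I treat the point directly. Write $z_0=e^{i\theta}$ and $c=\cos(\theta/2)$, $s_0=\sin(\theta/2)$. On the circle one checks that $V_{\xi,\eta}=(\xi c+\eta s_0)^2$. Take $(\xi,\eta)=t(c,s_0)+\tau(-s_0,c)$. Then $V=t^2$ and
\[
 \Phi_{\xi,\eta}(z_0)=t\cos(\theta/2)+\tau\sin(\theta/2)-\log\cosh t .
\]
If $\sin(\theta/2)\ne0$, letting $\tau\to\pm\infty$ makes this arbitrarily large. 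If $\sin(\theta/2)=0$, so that $z_0=1$, it tends to $\log2$ as $t\to\infty$. Finally $I(\lambda)<I(1)=\log2$, because $I$ is increasing on $(0,1)$. So again some choice gives $\Phi_{\xi,\eta}(z_0)>\rho'$.
\end{itemize}

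\emph{Step 2 (compactness).} By continuity, each $z_0\in K$ has an open neighbourhood $U_{z_0}$ on which $\Phi_{\xi,\eta}>\rho'$ for its chosen $(\xi,\eta)$. Extract a finite subcover $U_1,\dots,U_M$ with parameters $(\xi_k,\eta_k)$. Both $M$ and these parameters depend only on $\lambda$ and $\rho$, not on $L$.

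\emph{Step 3 (Markov).} Every $\alpha\in\mathcal E_\lambda(L)$ has $z_L(\alpha)$ in some $U_k$, where $L\Phi_{\xi_k,\eta_k}(z_L(\alpha))>\rho' L$. Applying \eqref{eq:entropy-moment} for each $k$ and summing gives
\[
 \int_{\mathcal E_\lambda(L)}\mathrm d\alpha
 \le\sum_{k=1}^M e^{-\rho' L}\int_0^1\exp\bigl(L\Phi_{\xi_k,\eta_k}(z_L(\alpha))\bigr)\,\mathrm d\alpha
 \le e^{-\rho' L}\sum_{k=1}^M e^{(\xi_k^2+\eta_k^2)/2}
 \ll_{\lambda,\rho}e^{-\rho L}.
\]
This is \eqref{eq:exceptional-measures}; in fact it gives the slightly stronger rate $e^{-\rho' L}$.

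The main obstacle is the boundary $|z|=1$, and the point $z=1$ in particular, where the explicit coefficients blow up. The plan handles it with the separate direct computation in Step 1, which gives $\sup\Phi\ge\log2>I(\lambda)$ on the circle. The one identity to verify carefully is $V_{\xi,\eta}=(\xi c+\eta s_0)^2$ on the unit circle. This follows from $(1+\cos\theta)/2=c^2$, $(1-\cos\theta)/2=s_0^2$ and $\sin\theta=2cs_0$.
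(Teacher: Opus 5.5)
Your proposal is correct and is essentially the paper's own proof. The explicit coefficients \eqref{eq:chosen-coefficients} handle $|z|<1$, and a separate choice handles $|z|=1$: your half-angle direction $(-\sin(\theta/2),\cos(\theta/2))$ is a multiple of the paper's $(x-1,y)$, making $V_{\xi,\eta}=0$. Continuity, compactness, Markov's inequality and \eqref{eq:entropy-moment} then finish the argument, with only cosmetic differences: the paper treats $z=1$ with $(\xi,\eta)=(s,0)$, getting $I(\lambda)+(1-\lambda)s>\rho$ directly rather than through the limit $\log2>I(\lambda)$, and covers the explicit closed set $\{|z|\le1,\ (1+r_\lambda-x)|z|^2\ge r_\lambda\}$ instead of $\{|z|\le1\}\setminus\operatorname{int}\mathcal D_\lambda$.
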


\begin{proof}
Put $r=r_\lambda$ and $s=\frac12\log\frac{1+\lambda}{1-\lambda}$.
First consider $z=x+iy$ satisfying
\begin{equation*}
 u=x^2+y^2\le1,\qquad (1+r-x)u\ge r.
\end{equation*}
If $u<1$, then $u>0$ and
\begin{equation*}
 u(1-x)\ge r(1-u),\qquad
 \sqrt{\frac{2u(1-x)}{1-u}}\ge\lambda.
\end{equation*}
The choice \eqref{eq:chosen-coefficients} and
\eqref{eq:directional-value} give
\begin{equation*}
 \Phi_{\xi,\eta}(z)
 \ge s\lambda-\log\cosh s=I(\lambda)>\rho.
\end{equation*}
At $z=1$, take $(\xi,\eta)=(s,0)$. Then
\begin{equation*}
 \Phi_{s,0}(1)=s-\log\cosh s
 =I(\lambda)+(1-\lambda)s>\rho.
\end{equation*}
If $u=1$ and $x<1$, take
$t=(\rho+1)/(1-x)$ and $(\xi,\eta)=t(x-1,y)$. Direct expansion gives
\begin{equation*}
 2V_{\xi,\eta}(x,y)=t^2(1-x)(1-x^2-y^2)=0,
 \qquad \xi x+\eta y=t(1-x).
\end{equation*}
Consequently,
\begin{equation*}
 \Phi_{\xi,\eta}(z)=t(1-x)=\rho+1>\rho.
\end{equation*}

By continuity, the same pair gives $\Phi_{\xi,\eta}>\rho$ in a small
disc about the point, for $|z|\le1$. The region
$x^2+y^2\le1$, $(1+r-x)(x^2+y^2)\ge r$ is closed and bounded,
so finitely many such discs cover it. Thus there are fixed pairs
$(\xi_j,\eta_j)$, $1\le j\le m$, depending only on $\lambda,\rho$,
such that
\begin{equation*}
 \begin{gathered}
 |z|\le1,\qquad (1+r-x)|z|^2\ge r\\
 \Longrightarrow\quad
 \max_{1\le j\le m}\Phi_{\xi_j,\eta_j}(z)>\rho.
 \end{gathered}
\end{equation*}
For $\alpha\in\mathcal E_\lambda(L)$, these conditions hold with
$z=z_L(\alpha)$ by \eqref{eq:region-test}. Hence
\begin{align*}
 \int_{\mathcal E_\lambda(L)}\mathrm d\alpha
 &\le\sum_{j=1}^m
 \int_{\{\alpha\in[0,1]:\Phi_{\xi_j,\eta_j}(z_L(\alpha))>\rho\}}
 \mathrm d\alpha\\
 &\le e^{-\rho L}\sum_{j=1}^m
 \int_0^1\exp\bigl(L\Phi_{\xi_j,\eta_j}(z_L(\alpha))\bigr)
 \,\mathrm d\alpha\\
 &\le e^{-\rho L}\sum_{j=1}^m
 \exp\left(\frac{\xi_j^2+\eta_j^2}{2}\right)
 \ll_{\lambda,\rho}e^{-\rho L},
\end{align*}
by \eqref{eq:entropy-moment}.
\end{proof}

For fixed polynomials $P_1,\ldots,P_m$ and constants $c_j\ge0$,
put
\begin{equation*}
 H_\lambda=\max_{z\in\mathcal D_\lambda}
 \sum_{j=1}^m c_j|P_j(z)|^2.
\end{equation*}
The same exceptional set gives
\begin{equation}\label{eq:polynomial-region-bound}
 \alpha\notin\mathcal E_\lambda(L)
 \quad\Longrightarrow\quad
 \sum_{j=1}^m c_j|P_j(z_L(\alpha))|^2\le H_\lambda.
\end{equation}
Finally, for fixed $\sigma\in\mathbb R$ and $L$ in \eqref{eq:NL},
\begin{align*}
 \int_{\mathcal E_\lambda(L)}\mathrm d\alpha
 &\ll e^{-\rho L}=N^{-\rho/\log2+o(1)},\\
 N^{2\sigma+o(1)}\int_{\mathcal E_\lambda(L)}\mathrm d\alpha
 &\ll N^{2\sigma-\rho/\log2+o(1)}=o(N)
\end{align*}
provided
\begin{equation}\label{eq:rate-budget}
 \rho>(2\sigma-1)\log2.
\end{equation}
The next two sections choose $\lambda$ and $\rho$ with
$(2\sigma-1)\log2<\rho<I(\lambda)$.
\section{Completion of the six powers proof}\label{sec:completion}

By the definition of $I(\lambda)$ in \eqref{eq:entropy}, we have 
\begin{equation*}
 I(5/6)>\frac{81}{200}>\frac7{12}\log2.
\end{equation*}
Put $\mathcal E_6=\mathcal E_{5/6}(L)$. Proposition~\ref{prop:large}
with $\rho=81/200$ gives
\begin{equation*}
 \int_{\mathcal E_6}\mathrm d\alpha\ll e^{-81L/200}
 =N^{-\gamma_6+o(1)},\qquad
 \gamma_6=\frac{81}{200\log2}>\frac7{12}.
\end{equation*}
For fixed $a,b\ge0$ with $a+b<1$, recall
$Q(z)=z^2-az-b$ and $F(z)=L^2\cdot z\cdot Q(z/L)$, and put
\begin{equation*}
 H=\max_{z\in\mathcal D_{5/6}}|Q(z)|^2.
\end{equation*}
Then $H\ge|Q(0)|^2=b^2$ and
\begin{equation}\label{eq:goodminor}
\begin{aligned}
 \int_{\mathfrak m\setminus\mathcal E_6}
 |S(\alpha)\cdot F(G(\alpha))|^2\,\mathrm d\alpha
 &=L^4\int_{\mathfrak m\setminus\mathcal E_6}
 |S(\alpha)\cdot G(\alpha)|^2|Q(z_L(\alpha))|^2\,\mathrm d\alpha\\
 &\le HL^4\int_{\mathfrak m}|S(\alpha)\cdot G(\alpha)|^2\,\mathrm d\alpha.
\end{aligned}
\end{equation}

\begin{lemma}\label{lem:badminor}
For fixed $a,b\ge0$ with $a+b<1$ and every fixed $\kappa>0$
allowed by the major arc formula,
\begin{equation}\label{eq:badminor}
 \int_{\mathfrak m\cap\mathcal E_6}
 |S(\alpha)\cdot F(G(\alpha))|^2\,\mathrm d\alpha=o(NL^6).
\end{equation}
\end{lemma}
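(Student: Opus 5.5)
The plan is to combine the trivial bound $|F(G(\alpha))|\ll L^3$ with a pointwise minor-arc bound for $S(\alpha)$ and the measure bound for $\mathcal E_6$ from Proposition~\ref{prop:large}, following the budget in \eqref{eq:rate-budget}. Since $a,b$ are fixed and $|z_L(\alpha)|\le1$, we have $|F(G(\alpha))|=L^3|z_L(\alpha)||Q(z_L(\alpha))|\le(1+a+b)L^3$. Hence
\[
 \int_{\mathfrak m\cap\mathcal E_6}|S(\alpha)F(G(\alpha))|^2\,\mathrm d\alpha
 \ll L^6\Bigl(\sup_{\alpha\in\mathfrak m}|S(\alpha)|\Bigr)^2\int_{\mathcal E_6}\mathrm d\alpha ,
\]
and it suffices to show that $\sup_{\mathfrak m}|S(\alpha)|^2\int_{\mathcal E_6}\mathrm d\alpha=o(N)$.

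For the pointwise bound I would use Vinogradov's estimate in Vaughan's form: if $|\alpha-r/q|\le q^{-2}$ with $(r,q)=1$, then
\[
 S(\alpha)\ll(\log N)^4\bigl(Nq^{-1/2}+N^{4/5}+N^{1/2}q^{1/2}\bigr).
\]
For $\alpha\in\mathfrak m$, Dirichlet's theorem with denominator bound $N/P$ gives $q\le N/P$ and $|\alpha-r/q|\le P/(qN)\le q^{-2}$. Since $\alpha\notin\mathfrak M$, we must have $q>P$. Therefore $S(\alpha)\ll N^{1+o(1)}P^{-1/2}+N^{4/5+o(1)}$. With $P\ge N^{11/25}$ from \eqref{eq:Prange}, this gives $\sup_{\mathfrak m}|S|\le N^{\sigma+o(1)}$ with $\sigma=\max(1-11/50,4/5)=39/50$. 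The removal of the primes $p\le N_1$ costs at most $N_1=N^{1-\kappa}$, which is harmless for small $\kappa$ since $1-\kappa\ge39/50$ only matters through the $N_1$ term, and that term is smaller than the main bound once $\kappa>11/50$. If $\kappa$ is smaller than that, I would instead bound the sum over $p\le N_1$ separately by the same Vinogradov estimate applied at length $N_1$. This is one point that needs care.

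The main obstacle is the exponent budget. By \eqref{eq:rate-budget} we need $\rho>(2\sigma-1)\log 2$. With $\sigma=39/50$ this requires $\gamma_6>2\sigma-1=14/25=0.56$. The choice above gives $\gamma_6>7/12\approx0.583>0.56$, so the bound closes, since
\[
 N^{2\sigma+o(1)}N^{-\gamma_6+o(1)}=N^{1-(\gamma_6-14/25)+o(1)}=o(N).
\]
In fact the stated threshold $7/12$ corresponds to $\sigma=19/24$, which suggests the authors use a slightly weaker pointwise bound, possibly one derived from the Pintz--Ruzsa explicit formula with the range \eqref{eq:Prange}. Either way the margin suffices, and the $L^6$ factor combines to give $o(NL^6)$.

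If the Vaughan bound proves awkward with the weight $\log p$ and the cutoff $N_1$, the fallback is to use partial summation from the von Mangoldt sum, with negligible error from prime powers. That yields the same exponent.
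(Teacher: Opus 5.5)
There is a genuine gap, and it is in the exponent bookkeeping that your whole argument rests on. On $\mathfrak m$, Vaughan's estimate gives $S(\alpha)\ll N^{o(1)}\bigl(NP^{-1/2}+N^{4/5}\bigr)$. With $P\ge N^{11/25}$ this is $N^{\sigma+o(1)}$ with $\sigma=\max(39/50,4/5)=4/5$, not $39/50$. The $N^{4/5}$ term dominates, and it does not depend on $P$, so enlarging $P$ within \eqref{eq:Prange} cannot help.

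With $\sigma=4/5$, the budget \eqref{eq:rate-budget} requires $\rho>\tfrac35\log2\approx0.4159$. But Proposition~\ref{prop:large} only allows $\rho<I(5/6)\approx0.4063$. Equivalently, $\gamma_6$ can be at most $I(5/6)/\log2\approx0.586<3/5$. So no admissible $\rho$ exists, and the bound does not close.

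Your remark that the paper's threshold $19/24$ reflects a ``slightly weaker'' pointwise bound is backwards: $19/24<4/5$, and the proof needs exactly this improvement over Vaughan. The paper writes $\alpha\in\mathfrak m$ via Dirichlet as $r/q+\beta$ with $q\le\sqrt N$ and $\max(q,qN|\beta|)\ge P$. It then invokes Maynard--Pandey--Radziwi\l\l{} \cite[Theorem~1.1]{MPR}, which gives $S_{\le N}(\alpha)\ll N^{o(1)}\bigl(NP^{-1/2}+N^{19/24}\bigr)$. Since $P>N^{5/12}$, this is $N^{19/24+o(1)}$. Then $2\sigma-1=7/12<\gamma_6=81/(200\log2)$, with a margin of only about $10^{-3}$. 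No form of the classical Vinogradov--Vaughan bound with the $N^{4/5}$ barrier suffices for $\lambda=5/6$.

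A secondary point concerns the primes $p\le N_1$. Since $\kappa$ is small, $N_1=N^{1-\kappa}$ is far larger than any minor-arc bound, and the condition $\kappa>11/50$ you mention is not available. Your fallback of applying the estimate at length $N_1$ with the same $q$ could be made to work. However, the paper avoids pointwise bounds here entirely. It writes $S=S_{\le N}-S_{\le N_1}$ and bounds the $S_{\le N_1}$ contribution over all of $[0,1]$ by Parseval:
\[
 L^6\sum_{p\le N_1}(\log p)^2\ll L^6N_1\log N=o(NL^6).
\]
The pointwise bound is then needed only for $S_{\le N}$ on $\mathfrak m$.
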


\begin{proof}
For $\alpha\in\mathfrak m$, Dirichlet's approximation theorem gives
\begin{gather*}
 \alpha=\frac rq+\beta,\qquad (r,q)=1,\qquad
 q\le\sqrt N,\qquad |\beta|\le\frac1{q\sqrt N},\\
 \max(q,qN|\beta|)\ge P\ge N^{11/25}>N^{5/12}.
\end{gather*}
Write
$S_{\le X}(\alpha)=\sum_{p\le X}(\log p)\cdot e(p\alpha)$.
The estimate of Maynard, Pandey and
Radziwi\l\l~\cite[Theorem~1.1]{MPR}, together with
\begin{equation*}
 \sum_{\substack{p^k\le N\\k\ge2}}\log p
 \ll N^{1/2}\log^2N,
\end{equation*}
gives
\begin{align*}
 |S_{\le N}(\alpha)|
 &\ll N^{o(1)}\left(NP^{-1/2}+N^{19/24}\right)
       +N^{1/2}\log^2N\\
 &\ll N^{19/24+o(1)}.
\end{align*}
Since $|F(G(\alpha))|\ll L^3$ and
$S(\alpha)=S_{\le N}(\alpha)-S_{\le N_1}(\alpha)$, Parseval gives
\begin{align*}
 &\int_{\mathfrak m\cap\mathcal E_6}
 |S(\alpha)\cdot F(G(\alpha))|^2\,\mathrm d\alpha\\
 &\quad\ll L^6\left(\left(\int_{\mathcal E_6}\mathrm d\beta\right)
       \sup_{\alpha\in\mathfrak m}|S_{\le N}(\alpha)|^2
       +\int_0^1|S_{\le N_1}(\alpha)|^2\,\mathrm d\alpha\right)\\
 &\quad\ll L^6\left(N^{19/12-\gamma_6+o(1)}
                  +\sum_{p\le N_1}(\log p)^2\right)\\
 &\quad\ll L^6\left(N^{19/12-\gamma_6+o(1)}+N_1\log N\right)
       =o(NL^6),
\end{align*}
where
\begin{equation*}
 \frac{19}{12}-\gamma_6<1,\qquad
 \frac{N_1\log N}{N}=N^{-\kappa}\log N=o(1).
\end{equation*}
\end{proof}

Put $v=(-b,-a,1)^T$. Lemma~\ref{lem:Jlower} gives
\begin{equation*}
 \frac{R_1(0)}{2NL}\ge\frac{11\log2}{50}+o(1),
 \qquad b^2-H\le0.
\end{equation*}
Combining \eqref{eq:majorF}, \eqref{eq:minorbase},
\eqref{eq:goodminor} and \eqref{eq:badminor}, we obtain
\begin{align*}
 \frac1{2NL^6}\int_0^1|S(\alpha)\cdot F(G(\alpha))|^2\,\mathrm d\alpha
 &\le(1-a-b)^2+v^TAv
       +(b^2-H)\frac{R_1(0)}{2NL}\\
 &\quad+H\left(\frac{C_1-2}{2}C_0R_0+\frac{\log2}{2}\right)
       +O(\varepsilon)+o(1)\\
 &\le(1-a-b)^2+v^TAv+\frac{11b^2\log2}{50}\\
 &\quad+H\left(\frac{C_1-2}{2}C_0R_0+\frac{7\log2}{25}\right)
       +O(\varepsilon)+o(1).
\end{align*}
By Lemma~\ref{lem:positive}, a sufficient condition for
$r''_6(N)\gg NL^6$ is
\begin{equation}\label{eq:six-criterion}
 v^TAv+\frac{11b^2\log2}{50}
 +H\left(\frac{C_1-2}{2}C_0R_0+\frac{7\log2}{25}\right)
 <(1-a-b)^2.
\end{equation}
Since $\mathcal D_{5/6}$ is closed and bounded, $H$ is continuous in $a,b$;
a strict inequality remains valid for sufficiently small changes
of these coefficients within $a,b\ge0$, $a+b<1$.

We take
\begin{equation}\label{eq:dyadic-choice}
 a=\frac14,\qquad b=\frac1{16},\qquad
 Q(z)=z^2-\frac z4-\frac1{16},\qquad
 v=\left(-\frac1{16},-\frac14,1\right)^T.
\end{equation}
The matrices in Section~\ref{sec:singular} give
\begin{equation*}
 v^TA_*v=\frac{54590609909}{2799360000000}<\frac1{50}.
\end{equation*}
Hence, by \eqref{eq:PSDtail},
\begin{equation}\label{eq:Gram-six}
 v^TAv<\frac1{50}
       +\left(\frac{0.254}{16}+\frac{0.077}{4}+0.04\right)^2
 <\frac{13}{500}.
\end{equation}
Direct computation gives 
\begin{equation}\label{eq:six-poly}
 |Q(z)|^2<\frac2{15}
\end{equation}
for $z\in\mathcal D_{5/6}$.

\begin{proof}[Proof of Theorem~\ref{thm:main}]
By \eqref{eq:dyadic-choice}, \eqref{eq:Gram-six} and
\eqref{eq:six-poly},
\begin{equation}\label{eq:finalmoment}
\begin{aligned}
 \frac1{2NL^6}
 \int_0^1|S(\alpha)\cdot F(G(\alpha))|^2\,\mathrm d\alpha
 <\frac{121}{256}+\frac{15}{32}+O(\varepsilon)+o(1), 
\end{aligned}
\end{equation}
where we use $C_0R_0<1.279$, $C_1=6.7814$, $\log2<0.694$. 
Lemma~\ref{lem:positive} therefore gives
\begin{align*}
 r''_6(N)
 &\ge\left(4\cdot\frac{121}{256}
       -2\left(\frac{121}{256}+\frac{15}{32}\right)
       -O(\varepsilon)+o(1)\right)NL^6\\
 &=\left(\frac1{128}-O(\varepsilon)+o(1)\right)NL^6
 >\frac1{1000}NL^6,
\end{align*}
on choosing $\varepsilon$ sufficiently small, fixing an allowed
$\kappa>0$, and then taking $N$ sufficiently large and even.
\end{proof}
\section{Five powers under GRH}\label{sec:five}

Assume GRH, use the arcs of
Lemma~\ref{lem:grhbridge}. By the definition of $I(\lambda)$ in \eqref{eq:entropy}, we have 
\begin{equation*}
 I(25/32)
 >\frac{347}{1000}>\frac12\log2.
\end{equation*}
Put $\mathcal E_5=\mathcal E_{25/32}(L)$. Proposition~\ref{prop:large}
with $\rho=347/1000$ gives
\begin{equation*}
 \int_{\mathcal E_5}\mathrm d\alpha\ll e^{-347L/1000}
 =N^{-\gamma_5+o(1)},\qquad
 \gamma_5=\frac{347}{1000\log2}>\frac12.
\end{equation*}

For fixed $a,b\ge0$, $a+b<1$, $0\le c<1$ and $\tau>0$, put
\begin{align*}
 Q(z)&=z^2-az-b,\\
 f_2(n)&=r'_2(n)-cL\cdot r'_1(n),\\
 f_3(n)&=r'_3(n)-aL\cdot r'_2(n)-bL^2\cdot r'_1(n),\\
 g_j(n)&=\max\{f_j(n),0\}\qquad(j=2,3),\\
 F_2(z)&=z^2-cLz,\qquad F_3(z)=z^3-aLz^2-bL^2z.
\end{align*}
Then $0\le g_j(n)\le r'_j(n)$ and
\begin{equation*}
 f_j(n)=\int_0^1S(\alpha)\cdot F_j(G(\alpha))e(-n\alpha)\,\mathrm d\alpha
 \qquad(j=2,3).
\end{equation*}
By \eqref{eq:firstmoment},
\begin{align*}
 \sum_{n\in U_N}g_2(n)&\ge\sum_{n\in U_N}f_2(n)
 =(1-c+o(1))NL^2,\\
 \sum_{n\in U_N}g_3(n)&\ge\sum_{n\in U_N}f_3(n)
 =(1-a-b+o(1))NL^3.
\end{align*}
Put
\begin{equation}\label{eq:jointmoment}
 \mathcal I=L^2\int_0^1|S(\alpha)\cdot F_2(G(\alpha))|^2\,\mathrm d\alpha
       +\tau^2\int_0^1|S(\alpha)\cdot F_3(G(\alpha))|^2\,\mathrm d\alpha.
\end{equation}

\begin{lemma}\label{lem:jointreflection}
With the notation above,
\begin{equation}\label{eq:jointreflection}
 r''_5(N)\ge
 \frac{2NL^6(1-c+\tau(1-a-b))^2-\mathcal I}{2\tau L}+o(NL^5).
\end{equation}
\end{lemma}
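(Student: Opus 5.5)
We follow the reflection argument of Lemma~\ref{lem:positive}, now mixing the two signed functions. Since $g_2(n)\le r'_2(n)$ and $g_3(m)\le r'_3(m)$, and every pair $(n,N-n)$ with $n\in U_N$ contributes to a representation of $N$ with $2+3=5$ powers, we have
\begin{equation*}
 r''_5(N)\ge\sum_{n\in U_N}g_2(n)g_3(N-n).
\end{equation*}
The plan is to symmetrise with the weight $\tau L$. Put $X(n)=Lg_2(n)$ and $Y(n)=\tau g_3(N-n)$. Then
\begin{equation*}
 2\tau L\sum_{n\in U_N}g_2(n)g_3(N-n)=2\sum_{n\in U_N}X(n)Y(n)
 =\sum_{n\in U_N}(X(n)+Y(n))^2-\sum_{n\in U_N}X(n)^2-\sum_{n\in U_N}Y(n)^2.
\end{equation*}

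For the first sum we use Cauchy's inequality over $U_N$, with $|U_N|=N/2$:
\begin{equation*}
 \sum_{n\in U_N}(X+Y)^2\ge\frac2N\Bigl(L\sum_{n\in U_N}g_2(n)+\tau\sum_{n\in U_N}g_3(n)\Bigr)^2,
\end{equation*}
where the reflection $n\mapsto N-n$ permutes $U_N$. The first-moment bounds displayed before \eqref{eq:jointmoment} show that the bracket is at least $NL^3(1-c+\tau(1-a-b)+o(1))$. This lower bound is legitimate because the bracket is nonnegative, being a sum of nonnegative terms, and its lower bound $1-c+\tau(1-a-b)$ is positive. Hence the first sum is at least $2NL^6(1-c+\tau(1-a-b))^2+o(NL^6)$.

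For the two subtracted sums we argue as in \eqref{eq:momenttarget}. Since $0\le g_j\le|f_j|$, Parseval gives
\begin{equation*}
 \sum_{n\in U_N}X(n)^2+\sum_{n\in U_N}Y(n)^2
 \le L^2\sum_n f_2(n)^2+\tau^2\sum_n f_3(n)^2=\mathcal I.
\end{equation*}
Dividing by $2\tau L$ then turns the $o(NL^6)$ error into $o(NL^5)$, which is \eqref{eq:jointreflection}.

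The only delicate point is the bookkeeping. First, the normalisations $L\cdot L^2$ and $\tau\cdot L^3$ must match, so that both terms of the bracket are of order $NL^3$. Second, $r''_5(N)$ must count the pair $(p_1,p_2)$ together with the five exponents exactly as the product $r'_2(n)r'_3(N-n)$ does; both are ordered counts with the same ranges $N_1<p\le N$ and $1\le\nu\le L$, so this matches directly. Beyond this there is no real obstacle: the argument is an elementary variant of Lemma~\ref{lem:positive}.
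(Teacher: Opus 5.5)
Your proposal is correct and matches the paper's proof essentially line by line. Both arguments start from $r''_5(N)\ge\sum_{n\in U_N}g_2(n)g_3(N-n)$ and apply the identity $2XY=(X+Y)^2-X^2-Y^2$ with $X=Lg_2(n)$ and $Y=\tau g_3(N-n)$, followed by the reflection, Cauchy's inequality over $U_N$, the first-moment lower bounds and Parseval; your side remarks (positivity of the bracket before squaring, and $g_j^2\le f_j^2$) are just the small justifications the paper leaves implicit.
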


\begin{proof}
The map $n\mapsto N-n$ permutes $U_N$, and $|U_N|=N/2$.
By Cauchy's inequality and Parseval,
\begin{align*}
 2\tau Lr''_5(N)&\ge2\tau L\sum_{n\in U_N}g_2(n)g_3(N-n)\\
 &=\sum_{n\in U_N}\bigl(Lg_2(n)+\tau g_3(N-n)\bigr)^2\\
 &\quad-L^2\sum_{n\in U_N}g_2(n)^2
                  -\tau^2\sum_{n\in U_N}g_3(n)^2\\
 &\ge\frac2N\left(L\sum_{n\in U_N}g_2(n)
                   +\tau\sum_{n\in U_N}g_3(n)\right)^2-\mathcal I\\
 &\ge2NL^6\bigl(1-c+\tau(1-a-b)\bigr)^2
                 -\mathcal I+o(NL^6).
\end{align*}
\end{proof}

Write $w=(-c,1,0)^T$ and $v=(-b,-a,1)^T$, and put
\begin{equation*}
 H=\max_{z\in\mathcal D_{25/32}}
       \bigl(|z-c|^2+\tau^2|Q(z)|^2\bigr),\qquad
 D=c^2+\tau^2b^2.
\end{equation*}
Thus $H\ge D$, since $0\in\mathcal D_{25/32}$.
For a measurable set $E\subseteq[0,1]$, let $\mathcal I_E$ denote
\eqref{eq:jointmoment} with both integrals restricted to $E$. Then
\begin{equation*}
 \mathcal I_E=L^4\int_E|S(\alpha)\cdot G(\alpha)|^2
 \bigl(|z_L(\alpha)-c|^2+\tau^2|Q(z_L(\alpha))|^2\bigr)
 \,\mathrm d\alpha.
\end{equation*}
Proposition~\ref{prop:large} and \eqref{eq:grhexceptionalbridge} give
\begin{align*}
 \mathcal I_{\mathfrak m_{\mathrm G}\setminus\mathcal E_5}
 &\le HL^4\int_{\mathfrak m_{\mathrm G}}
 |S(\alpha)\cdot G(\alpha)|^2\,\mathrm d\alpha,\\
 \mathcal I_{\mathfrak m_{\mathrm G}\cap\mathcal E_5}
 &\ll L^6\int_{\mathfrak m_{\mathrm G}\cap\mathcal E_5}
       |S(\alpha)|^2\,\mathrm d\alpha=o(NL^6).
\end{align*}
By \eqref{eq:grhmixedbridge},
\begin{equation*}
 \mathcal I_{\mathfrak M_{\mathrm G}}
 =2NL^6\left((1-c)^2+\tau^2(1-a-b)^2
                 +w^TAw+\tau^2v^TAv+o(1)\right)
       +DL^5R_{1,\mathrm G}(0).
\end{equation*}
Also, \eqref{eq:grhJbridge} gives
\begin{equation*}
 \frac{R_{1,\mathrm G}(0)}{2NL}\ge\frac{\log2}{4}+o(1),
 \qquad D-H\le0.
\end{equation*}
Combining the major and minor arcs with \eqref{eq:grhminorbridge},
we obtain
\begin{align*}
 \frac{\mathcal I}{2NL^6}
 &\le(1-c)^2+\tau^2(1-a-b)^2+w^TAw+\tau^2v^TAv
       +(D-H)\frac{R_{1,\mathrm G}(0)}{2NL}\\
 &\quad+H\left(\frac{C_1-2}{2}C_0R_0+\frac{\log2}{2}\right)+o(1)\\
 &\le(1-c)^2+\tau^2(1-a-b)^2+w^TAw+\tau^2v^TAv
       +\frac{D\log2}{4}\\
 &\quad+H\left(\frac{C_1-2}{2}C_0R_0+\frac{\log2}{4}\right)+o(1).
\end{align*}
Since
\begin{equation*}
 (1-c+\tau(1-a-b))^2-(1-c)^2-\tau^2(1-a-b)^2
 =2\tau(1-c)(1-a-b),
\end{equation*}
Lemma~\ref{lem:jointreflection} gives the sufficient condition
\begin{equation}\label{eq:five-criterion}
 w^TAw+\tau^2v^TAv+\frac{D\log2}{4}
 +H\left(\frac{C_1-2}{2}C_0R_0+\frac{\log2}{4}\right)
 <2\tau(1-c)(1-a-b)
\end{equation}
for $r''_5(N)\gg NL^5$.

We now take $a=1/4$, $b=1/16$, $c=1/8$ and $\tau=2$. Then
\begin{gather*}
 Q(z)=\left(z-\frac18\right)^2-\frac5{64},\qquad
 D=\frac1{32},\qquad 1-c+\tau(1-a-b)=\frac94,\\
 w^TA_*w+4v^TA_*v
 =\frac{51822763867}{349920000000}<\frac3{20}.
\end{gather*}
By \eqref{eq:PSDtail},
\begin{equation}\label{eq:Gram-five}
\begin{aligned}
 w^TAw+4v^TAv
 &<\frac3{20}+\left(\frac{0.254}{8}+0.077\right)^2
       +4\left(\frac{0.254}{16}+\frac{0.077}{4}+0.04\right)^2\\
 &<\frac{37}{200}.
\end{aligned}
\end{equation}
Direct computation gives 
\begin{equation}\label{eq:five-poly}
 \left|z-\frac18\right|^2+4|Q(z)|^2<\frac{41}{60}
\end{equation}
for $z\in\mathcal D_{25/32}$. 

\begin{proof}[Proof of Theorem~\ref{thm:GRHfive}]
Since $(1-c)^2+\tau^2(1-a-b)^2=85/32$,
\eqref{eq:Gram-five} and \eqref{eq:five-poly} give
\begin{equation}\label{eq:fivejointbound}
\begin{aligned}
 \frac{\mathcal I}{2NL^6}
 <\frac{85}{32}+\frac{12}{5}+o(1),
\end{aligned}
\end{equation}
where the inequality follows from $C_0R_0<1.279$,
$C_1=6.7814$, $\log2<0.694$.
Lemma~\ref{lem:jointreflection} yields, for sufficiently large even $N$,
\begin{align*}
 r''_5(N)
 &\ge\left(\frac12\left(\frac{81}{16}-\frac{85}{32}-\frac{12}{5}\right)
          +o(1)\right)NL^5\\
 &=\left(\frac1{320}+o(1)\right)NL^5>\frac1{1000}NL^5.
\end{align*}
\end{proof}

\section*{Acknowledgments}
The authors acknowledge the use of OpenAI's ChatGPT during the preparation
of this manuscript. 
The authors assume responsibility for all content.
Huixi Li is supported by the National Natural Science Foundation of China (Grant No. 12561001). 
Zhixin Liu is supported by  Independent Innovation Foundation of Tianjin University (Grant No. 2025XJ21-0009).

\end{document}